\begin{document}
\newcommand{\chp}{\mathbf 1_{\mathcal P}}

\newtheorem{thm}{Theorem}[section]
\newtheorem{theorem}[thm]{Theorem}
\newtheorem{lem}[thm]{Lemma}
\newtheorem{lemma}[thm]{Lemma}
\newtheorem{prop}[thm]{Proposition}
\newtheorem{proposition}[thm]{Proposition}
\newtheorem{cor}[thm]{Corollary}
\newtheorem{defn}[thm]{Definition}
\newtheorem*{remark}{Remark}
\newtheorem{conj}[thm]{Conjecture}

\numberwithin{equation}{section}

\newcommand{\Z}{{\mathbb Z}}
\newcommand{\Q}{{\mathbb Q}}
\newcommand{\R}{{\mathbb R}}
\newcommand{\C}{{\mathbb C}}
\newcommand{\N}{{\mathbb N}}
\newcommand{\FF}{{\mathbb F}}
\newcommand{\fq}{\mathbb{F}_q}
\newcommand{\rmk}[1]{\footnote{{\bf Comment:} #1}}

\newcommand{\bfA}{{\boldsymbol{A}}}
\newcommand{\bfY}{{\boldsymbol{Y}}}
\newcommand{\bfX}{{\boldsymbol{X}}}
\newcommand{\bfZ}{{\boldsymbol{Z}}}
\newcommand{\bfa}{{\boldsymbol{a}}}
\newcommand{\bfy}{{\boldsymbol{y}}}
\newcommand{\bfx}{{\boldsymbol{x}}}
\newcommand{\bfz}{{\boldsymbol{z}}}
\newcommand{\F}{\mathcal{F}}
\newcommand{\Gal}{\mathrm{Gal}}
\newcommand{\Fr}{\mathrm{Fr}}
\newcommand{\Hom}{\mathrm{Hom}}
\newcommand{\GL}{\mathrm{GL}}

\renewcommand{\mod}{\;\operatorname{mod}}
\newcommand{\ord}{\operatorname{ord}}
\newcommand{\TT}{\mathbb{T}}

\renewcommand{\i}{{\mathrm{i}}}
\renewcommand{\d}{{\mathrm{d}}}
\renewcommand{\^}{\widehat}
\newcommand{\HH}{\mathbb H}
\newcommand{\Vol}{\operatorname{vol}}
\newcommand{\area}{\operatorname{area}}
\newcommand{\tr}{\operatorname{tr}}
\newcommand{\norm}{\mathcal N}
\newcommand{\intinf}{\int_{-\infty}^\infty}
\newcommand{\ave}[1]{\left\langle#1\right\rangle} 
\newcommand{\Var}{\operatorname{Var}}
\newcommand{\Prob}{\operatorname{Prob}}
\newcommand{\sym}{\operatorname{Sym}}
\newcommand{\disc}{\operatorname{disc}}
\newcommand{\CA}{{\mathcal C}_A}
\newcommand{\cond}{\operatorname{cond}} 
\newcommand{\lcm}{\operatorname{lcm}}
\newcommand{\Kl}{\operatorname{Kl}} 
\newcommand{\leg}[2]{\left( \frac{#1}{#2} \right)}  
\newcommand{\Li}{\operatorname{Li}}

\newcommand{\sumstar}{\sideset \and^{*} \to \sum}

\newcommand{\LL}{\mathcal L} 
\newcommand{\sumf}{\sum^\flat}
\newcommand{\Hgev}{\mathcal H_{2g+2,q}}
\newcommand{\USp}{\operatorname{USp}}
\newcommand{\conv}{*}
\newcommand{\dist} {\operatorname{dist}}
\newcommand{\CF}{c_0} 
\newcommand{\kerp}{\mathcal K}

\newcommand{\Cov}{\operatorname{cov}}
\newcommand{\Sym}{\operatorname{Sym}}

\newcommand{\ES}{\mathcal S} 
\newcommand{\EN}{\mathcal N} 
\newcommand{\EM}{\mathcal M} 
\newcommand{\Sc}{\operatorname{Sc}} 
\newcommand{\Ht}{\operatorname{Ht}}

\newcommand{\E}{\operatorname{E}} 
\newcommand{\sign}{\operatorname{sign}} 

\newcommand{\divid}{d} 

\title[Divisor problems over function fields]
{Shifted convolution and the Titchmarsh divisor problem over
$\fq[t]$}
\author{J.~C.~Andrade, L.~Bary-Soroker and Z.~Rudnick}
\address{Institut des Hautes \'{E}tudes Scientifiques (IH\'{E}S),  Le Bois-Marie 35, route de Chartres, Bures-sur-Yvette, 91440, France}
\email{j.c.andrade@ihes.fr}

\address{Raymond and Beverly Sackler School of Mathematical Sciences, Tel Aviv University, Tel Aviv 69978, Israel}
\email{barylior@post.tau.ac.il}

\address{Raymond and Beverly Sackler School of Mathematical Sciences, Tel Aviv University, Tel Aviv 69978, Israel}
\email{rudnick@post.tau.ac.il}

\thanks{JCA is supported by an IH\'{E}S Postdoctoral Fellowship and an EPSRC William Hodge Fellowship. \\
The research leading to these results has received funding from the
European Research Council under the European Union's Seventh
Framework Programme (FP7/2007-2013) / ERC grant agreement
n$^{\text{o}}$ 320755, and from the Israel Science Foundation (grant
No. 925/14).}

\subjclass[2010]{Primary 11T55; Secondary 11G20 11M38, 11M50, 11N37,
11K65, 20B30} \keywords{finite fields, function fields, divisor
function, shifted convolution, random permutation, cycle structure}

\begin{abstract}
In this paper we solve a function field analogue of classical
problems in analytic number theory, concerning the
auto--correlations of divisor functions, in the limit of a large
finite field.
\end{abstract}
\date{\today}

\maketitle

\section{Introduction}

The goal of this note is to study a function-field analogue of
classical problems in analytic number theory, concerning the
auto-correlations of divisor functions. First we review the problems
over the integers $\mathbb{Z}$ and then we proceed to investigate
the same problems over the rational function field
$\mathbb{F}_{q}(t)$.

\subsection{The additive divisor problem and over $\Z$}

Let $\divid_k(n)$ be the number of representations of $n$ as a
product of $k$ positive integers ($\divid_2$ is the standard
divisor function). Several authors have studied the \textbf{additive
divisor problem} (other names are ``shifted divisor" and ``shifted
convolution"), which is to get bounds, or asymptotics, for the sum

\begin{equation}
\label{eq:main} D_k(x;h):=\sum_{n\leq x}\divid_k(n)\divid_k(n+h),
\end{equation}
where $h\neq 0$ is fixed for this discussion.

The case $k=2$ (the ordinary divisor function) has a long history:
Ingham \cite{Ingham} computed the leading term, and Estermann
\cite{Estermann} gave an asymptotic expansion
\begin{equation}
\label{eq:Estermann} \sum_{n\leq x} \divid_2(n)\divid_2(n+h) = x
P_2(\log x;h) +O(x^{11/12 } ( \log x)^3),
\end{equation}
where
\begin{equation}\label{Quadratic Z}
P_2(u;h) = \frac 1{\zeta(2)} \sigma_{-1}(h) u^2 +a_1(h) u+a_2(h),
\end{equation}
with
\begin{equation}
\sigma_{w}(h)=\sum_{d\mid h}d^{w},
\end{equation}
and $a_1(h)$, $a_2(h)$ are very complicated coefficients.

The size of the remainder term has great importance in applications
for various problems in analytic number theory, in particular the
dependence on $h$. See \cite{DI, HB}  for an improvement of the
remainder term.

The higher divisor problem $k\geq3$ is also of importance, in
particular in relation to computing the moments of the Riemann
$\zeta$-function on the critical line, see \cite{CG, Ivic}. It is
conjectured that
\begin{equation}\label{conj CG}
D_k(x;h)\sim xP_{2(k-1)}(\log x;h) \qquad \text{as
$x\rightarrow\infty$},
\end{equation}
where  $P_{2(k-1)}(u;h)$ is a polynomial in $u$ of degree $2(k-1)$,
whose coefficients depend on $h$ (and $k$). We can get good upper
bounds on the additive divisor problem from results in sieve theory
on sums of multiplicative functions evaluated at polynomials, for
instance as those by Nair and Tenenbaum \cite{NaTe}. The conclusion
is that for $h\neq0$
\begin{equation}
\label{eq-upperbounds} \sum_{n\leq X}d_{k}(n)d_{k}(n+h)\ll X(\log
X)^{2(k-1)},
\end{equation}
and we believe this is the right order of magnitude. But even a
conjectural description of the polynomials $P_{2(k-1)}(u;h)$ is
difficult to obtain, see \cite{CG, Ivic}, see
\S~\ref{sec:comparison}.

A variant of the problem about the auto-correlation of the divisor
function, is to determine an asymptotic for the more general
sum given by
\begin{equation}\label{eq:generaldivisor}
D_{k,r}(x;h):=\sum_{n\leq x}d_{k}(n)d_{r}(n+h).
\end{equation}
Asymptotics are known for the case $(k,r)=(k,2)$ for any positive
integer $k\geq 2$:
 Linnik \cite{Lin} showed
\begin{align}
\label{eq:Lin}
D_{k,2}(x;1)&=\sum_{n\leq x}d_{k}(n)d_{2}(n+1)\nonumber\\
&=\frac{1}{(k-1)!}\prod_{p}\left(1-\frac{1}{p}+\frac{1}{p}\left(1-\frac{1}{p}\right)^{k-1}\right)x(\log x)^{k}\\
&\qquad {} +O\left(x(\log x)^{k-1}(\log\log
x)^{k^{4}}\right).\nonumber
\end{align}

Motohashi  \cite{Mot1, Mot2, Mot3}  gave an asymptotic expansion
\begin{equation}
\label{eq:Mot2} D_{k;2}(x,h)=x\sum_{j=0}^{k}f_{k,j}(h)(\log
x)^{j}+O(x(\log x)^{\varepsilon-1}),
\end{equation}
for all $\varepsilon>0$ where the coefficients $f_{k,j}(h)$ can in
principle  be explicitly computed. For an improvement in the
$O$--term see \cite{FoTe}.

\subsection{The Titchmarsh divisor problem over $\Z$}
A different problem involving the mean value of the divisor function
is the \textbf{Titchmarsh divisor problem}. The problem is to
understand the average behaviour of the number of divisors of a
shifted prime, that is the asymptotics of the sum over primes
\begin{equation}
\label{eq:Tit} \sum_{p\leq x}d_{2}(p+a)
\end{equation}
where $a\neq0$ is a fixed integer, and $x\rightarrow\infty$.
Assuming GRH, Titchmarsh \cite{Tit} showed in 1931 that
\begin{equation}
\label{eq:Tit1} \sum_{p\leq x}d_{2}(p+a)\sim C_{1}x
\end{equation}
with
\begin{equation}
\label{eq:Tit2} C_{1}=\frac{\zeta(2)\zeta(3)}{\zeta(6)}\prod_{p\mid
a}\left(1-\frac{p}{p^{2}-p+1}\right)
\end{equation}
and this was proved unconditionally by Linnik \cite{Lin} in 1963.

Fouvry \cite{Fou} and Bombieri, Friedlander and Iwaniec \cite{BFI}
gave a secondary term
\begin{equation}
\label{eq:Tit3} \sum_{p\leq
x}d_{2}(p+a)=C_{1}x+C_{2}\Li(x)+O\left(\frac{x}{(\log x)^{A}}\right),
\end{equation}
for all $A>1$ and
\begin{equation}
\label{eq:Tit4} C_{2}=C_{1}\left(\gamma-\sum_{p}\frac{\log
p}{p^{2}-p+1}+\sum_{p\mid a}\frac{p^{2}\log
p}{(p-1)(p^{2}-p+1)}\right)
\end{equation}
with $\gamma$ being the Euler-Mascheroni constant and $\Li(x)$ the
logarithmic integral function.

In the following sections we study the additive divisor problem and
the Titchmarsh divisor problem over $\mathbb{F}_{q}[t]$, obtaining
definitive analogues of the conjectures described above.

\subsection{The additive divisor problem over $\fq[t]$}

\label{sec: additive divisor problem} We denote by $\EM_n$ the set
of monic polynomials in $\fq[t]$ of degree $n$. Note that $\#\EM_n =
q^n$.

The  divisor function  $\divid_k(f)$ is the number of ways to write
a monic polynomial $f$ as a product of $k$ monic polynomials:
\begin{equation}
\divid_k(f) = \#\{(a_1,\dots,a_k) , f=a_1\cdot a_2\cdots a_k\},
\end{equation}
where it is allowed to have $a_{i}=1$.

The mean value of $\divid_k(f)$ has an exact formula (see
Lemma~\ref{lem:mean value of tau}):
\begin{equation}\label{eq:mvdiv_k}
\frac{1}{q^n}\sum_{f\in \EM_n} \divid_{k}(f) = \binom{n+k-1}{k-1}.
\end{equation}
Note that $\binom{n+k-1}{k-1}$ is a polynomial in $n$ of degree
$k-1$ and leading coefficient $1/(k-1)!$. Our first goal is to study
the auto-correlation of $\divid_k$
 in the limit $q\to \infty$. We show:

\begin{theorem}\label{main thm}
Fix $n>1$. Then
\begin{equation}
\label{eq:main thm} \frac 1{q^n} \sum_{f\in \EM_n}
\divid_k(f)\divid_k(f+h) = \binom{n+k-1}{k-1}^2  +
O\left(q^{-\frac12}\right),
\end{equation}
uniformly for all $0\neq h \in \FF_q[t]$ of degree $\deg(h)<n$, as
$q\rightarrow\infty$.
\end{theorem}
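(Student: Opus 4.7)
My plan is to interpret the divisor function via the factorization type of $f$, and then invoke a function field Chebotarev theorem in the large-$q$ limit. The starting observation is that on the locus where $f$ is squarefree, $\divid_k(f) = k^{\omega(f)}$, where $\omega(f)$ is the number of distinct irreducible factors of $f$. Writing $\sigma_f \in S_n$ for the Frobenius permutation on the $n$ roots of $f$ in $\overline{\fq}$, we have $\omega(f) = c(\sigma_f)$, the number of cycles of $\sigma_f$. Thus on the squarefree locus, $\divid_k(f)\,\divid_k(f+h) = k^{c(\sigma_f) + c(\sigma_{f+h})}$, and the problem reduces to computing the joint distribution of $(\sigma_f,\sigma_{f+h})$ as $f$ varies.

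The first step is to discard the non-squarefree contribution. The locus where $f$ or $f+h$ has a repeated factor is cut out by $\disc(f) \cdot \disc(f+h) = 0$, a nonzero polynomial in the coefficients $a_0,\dots,a_{n-1}$ of $f$, so it has cardinality $O_n(q^{n-1})$. Using the crude bound $\divid_k(f) \leq k^n$ on $\EM_n$ (which follows by multiplicativity of $\divid_k$ from the inequality $\binom{a+k-1}{k-1} \leq k^a$), the contribution of the non-squarefree locus to the normalized sum is $O(q^{-1})$, which is absorbed by the error term.

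The central step is a monodromy computation. Let $F(t) = t^n + A_{n-1}t^{n-1}+\dots+A_0 \in \fq[A_0,\dots,A_{n-1}][t]$ be the generic monic polynomial of degree $n$. I claim the geometric Galois group of $F(t)(F(t)+h(t))$ over $K = \overline{\fq}(A_0,\dots,A_{n-1})$ is the full product $S_n \times S_n$. Each factor alone has group $S_n$ by the classical argument for the generic polynomial of degree $n$ (invariant under the additive shift by $h$). The joint group $G \leq S_n \times S_n$ surjects onto each factor, so by Goursat's lemma it is either the entire product or the graph of an isomorphism between matching quotients of the two copies of $S_n$. One rules out the diagonal and the sign-twisted diagonal by exhibiting specializations of $(A_i)$ at which the cycle types of $f$ and $f+h$ differ in a way incompatible with any such identification (for instance, specializing so that $f$ is irreducible of degree $n$ while $f+h$ is reducible, using that $\deg h < n$ leaves substantial freedom). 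With full geometric monodromy in hand, the explicit Chebotarev density theorem over function fields (via Deligne's equidistribution) gives, for each pair of conjugacy classes $\lambda,\mu \subset S_n$,
\begin{equation*}
\frac{1}{q^n} \#\{a \in \fq^n : \sigma_{f_a} \in \lambda,\ \sigma_{f_a+h} \in \mu\} = \frac{|\lambda|}{n!}\cdot\frac{|\mu|}{n!} + O(q^{-1/2}),
\end{equation*}
with implied constant depending only on $n$.

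The final step is combinatorial. Summing over conjugacy classes and factoring the main term,
\begin{equation*}
\frac{1}{q^n}\sum_{f\in\EM_n}\divid_k(f)\,\divid_k(f+h) = \left(\frac{1}{n!}\sum_{\sigma\in S_n} k^{c(\sigma)}\right)^{\!2} + O(q^{-1/2}),
\end{equation*}
and the classical identity $\sum_{\sigma\in S_n}x^{c(\sigma)} = x(x+1)\cdots(x+n-1)$ evaluated at $x=k$ gives $\frac{1}{n!}\sum_\sigma k^{c(\sigma)} = \binom{n+k-1}{k-1}$, yielding the claimed main term. The main obstacle I anticipate is the monodromy step: proving independence of the two $S_n$-factors uniformly in the fixed shift $h$, so long as $0\neq h$ and $\deg h<n$, and tracking that the error in the Chebotarev step remains $O(q^{-1/2})$ with a constant depending only on $n$ (not on $h$).
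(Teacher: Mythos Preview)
Your proposal is correct and follows essentially the same route as the paper: discard the non-squarefree locus, establish that the joint monodromy of the generic $F$ and $F+h$ is $S_n\times S_n$ (the paper imports this from \cite{BS} rather than sketching a Goursat argument, and notes it forces $\FF_q$ to be algebraically closed in the splitting field, which is what the explicit Chebotarev theorem requires), and then sum over conjugacy classes. The only cosmetic difference is in the last step: you evaluate $\frac{1}{n!}\sum_{\sigma\in S_n}k^{c(\sigma)}=\binom{n+k-1}{k-1}$ directly via the rising-factorial identity, whereas the paper recovers the same value by observing $\langle k^{|\lambda(\bullet)|}\rangle=\langle\divid_k\rangle+O(q^{-1})$ and using the exact zeta-function formula $\langle\divid_k\rangle=\binom{n+k-1}{k-1}$.
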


In light of \eqref{eq:mvdiv_k}, Theorem~\ref{main thm} may be
interpreted as the statement that $d_k(f)$ and $d_k(f+h)$ become
independent in the limit $q\to \infty$ as long as $\deg(h)<n$.

To compare with conjecture \eqref{conj CG} over $\Z$ we note that
$\binom{n+k-1}{k-1}^2$ is a polynomial in $n$ of degree $2(k-1)$
with leading coefficient $1/[(k-1)!]^2$, in agreements with the
conjecture, see \S~\ref{Sec:compatibility}.

\noindent{\bf The case $h=0$:} As an aside, we note that the case
$h=0$ is of course dramatically different, indeed one can show that
\begin{equation}
\lim_{q\to \infty} \frac 1{q^n} \sum_{f\in \EM_n} \divid_k(f)^2  =
\binom{n+k^2-1}{k^2-1}
\end{equation}
is a polynomial of degree $k^2-1$ in $n$, rather than degree
$2(k-1)$ for nonzero shifts.



Our method in fact gives the more general result:
\begin{theorem}
\label{thm:general case} Let $\mathbf{k}= (k_1, \ldots, k_s)$ be a
tuple of positive integers and $\mathbf{h}=(h_1, \ldots, h_s)$ a
tuple of distinct polynomials in $\fq[t]$. We let
\[
D_{\mathbf{k}}(n;\mathbf{h})=\sum_{f\in\mathcal{M}_{n}}d_{k_1}(f+h_1)\cdots
d_{k_{s}}(f+h_{s}).
\]
Then, for fixed $n>1$,
\[
 \frac{1}{q^{n}}D_{\mathbf{k}}(n;\mathbf{h}) = \prod_{i=1}^s \binom{n+k_i-1}{k_i-1} +O\left(q^{-\frac{1}{2}}\right),
\]
uniformly on all tuples $\mathbf{h}=(h_{1},h_{2},\ldots, h_{s})$ of
distinct polynomials in $\mathbb{F}_{q}[t]$ of degrees
$\deg(h_{i})<n$ as $q\to \infty$.
\end{theorem}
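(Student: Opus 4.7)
The plan is to reduce the problem to a joint equidistribution statement for the Frobenius elements attached to $f+h_1, \ldots, f+h_s$ as $f$ ranges over $\EM_n$, following the strategy underlying Theorem~\ref{main thm}. The key input is that the divisor function on $\fq[t]$ depends only on the factorization type: if $g = p_1^{e_1} \cdots p_r^{e_r}$ with distinct monic irreducibles, then $d_k(g) = \prod_i \binom{e_i+k-1}{k-1}$, and in particular $d_k(g) = k^r$ when $g$ is squarefree. Moreover, the factorization type of a squarefree $g$ of degree $n$ is exactly the cycle type of Frobenius acting on the roots in its splitting field.

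First I would dispose of the non-squarefree contribution: the set of $f \in \EM_n$ such that some $f+h_i$ fails to be squarefree has size $O_n(q^{n-1})$, and the trivial pointwise bound $d_{k_i}(f+h_i) \leq \binom{n+k_i-1}{k_i-1}$ makes the total contribution $O(q^{n-1})$, comfortably inside the claimed error $O(q^{n-1/2})$. So one may restrict to $f$ for which each $f+h_i$ is squarefree of degree $n$, and the sum becomes a weighted count over $s$-tuples of Frobenius cycle types.

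Next, parametrize $\EM_n$ by its coefficients and set $F(T) = T^n + a_{n-1}T^{n-1} + \cdots + a_0$ with the $a_j$ indeterminates, $K = \fq(a_0,\ldots,a_{n-1})$. The crux is to show that the compositum of the splitting fields of $F+h_1, \ldots, F+h_s$ over $K$ has Galois group $S_n^s$. Each individual Galois group is $S_n$ by the standard generic polynomial argument, so what remains is \emph{linear disjointness} of the splitting fields, and this is the main obstacle. I would proceed group-theoretically: any nontrivial common subextension of two of the splitting fields corresponds to isomorphic proper quotients of $S_n$; for $n \geq 5$ (where $A_n$ is simple) the only possibility is the sign quotient, generated by $\sqrt{\disc(F+h_i)}$, and ruling it out reduces to checking that $\disc(F+h_i) \cdot \disc(F+h_j)$ is not a square in $K$ for $i \neq j$, which one verifies by a direct specialization argument exploiting $h_i \neq h_j$; the small cases $n \leq 4$ are handled by hand.

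With joint $S_n^s$-monodromy established, Deligne's equidistribution theorem (the function-field Chebotarev theorem with square-root cancellation) implies that the tuple of Frobenius conjugacy classes of $(f+h_1,\ldots,f+h_s)$ equidistributes in $S_n^s$ with error $O(q^{-1/2})$, uniformly over admissible tuples $\mathbf{h}$. The average over $S_n^s$ of $\prod_i k_i^{c(\sigma_i)}$ factorizes across the $s$ components, and the classical identity $\frac{1}{n!}\sum_{\sigma \in S_n} k^{c(\sigma)} = \binom{n+k-1}{k-1}$ (the coefficient of $x^n$ in $(1-x)^{-k}$) then yields the main term $\prod_{i=1}^s \binom{n+k_i-1}{k_i-1}$, completing the proof.
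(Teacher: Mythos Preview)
Your overall strategy coincides with the paper's: discard the non-squarefree locus, recognise $d_k$ on squarefree polynomials as the cycle-structure statistic $k^{|\lambda(\cdot)|}$, invoke joint equidistribution in $S_n^s$ via an explicit Chebotarev theorem, and read off the main term. The paper packages the equidistribution step as Theorem~\ref{cycle indep thm} and \emph{cites} \cite{BS} (together with \cite{Carmon} in characteristic~$2$) for the Galois group computation $\Gal(\F,K)=S_n^s$, whereas you attempt to sketch that computation directly. Your evaluation of the main term via the cycle-index identity $\frac{1}{n!}\sum_{\sigma\in S_n} k^{c(\sigma)}=\binom{n+k-1}{k-1}$ is equivalent to the paper's route through Lemma~\ref{lem:mean value of tau}.

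The one genuine gap is in your sketch of the $S_n^s$ claim, which you yourself flag as the main obstacle. Pairwise linear disjointness of the splitting fields of the $F+h_i$ does \emph{not} imply joint linear disjointness once $s\geq 3$: for $n\geq 5$ the obstruction lives in the sign quotient, and nothing you wrote excludes a relation such as $\disc(F+h_1)\cdot\disc(F+h_2)\cdot\disc(F+h_3)$ being a square in $K$ even when no pairwise product is. You would need to show that the characters $\sigma_i\mapsto\mathrm{sgn}(\sigma_i)$ are jointly independent, i.e.\ that no nontrivial subproduct of the discriminants is a square; this requires a more careful specialisation argument than the pairwise one. Separately, in characteristic~$2$ the discriminant no longer generates the sign extension, so the entire reduction to square-class computations collapses there; this is precisely why the paper invokes \cite{Carmon} rather than arguing directly.
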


In particular for $\mathbf{k}=(2,k)$ we get
\begin{align}
\lim_{q\rightarrow\infty}\frac{1}{q^{n}}D_{2,k}(n;h)&=(n+1)\binom{n+k-1}{k-1}\nonumber \\
&=\frac{1}{(k-1)!}\left(n^{k}+\frac{k^{2}-k+2}{2}n^{k-1}+\cdots\right),
\end{align}
in agreement with \eqref{eq:Lin}.


\subsection{The Titchmarsh divisor problem over $\fq[t]$}
Let $\mathcal{P}_{n}$ be the set of monic irreducible polynomials in
$\fq[t]$ of degree $n$. By the Prime Polynomial Theorem we have
\[
\pi_q(n):=\#\mathcal{P}_n=\frac{q^n}{n} +
O\left(\frac{q^{n/2}}{n}\right).
\]
Our next result is a solution of the  Titchmarsh divisor problem
over $\mathbb{F}_{q}[t]$ in the limit of large finite field.
\begin{theorem}
\label{TitDiv for P} Fix $n>1$.  Then
\begin{equation}
\label{TitDiv eq}
\frac{1}{\pi_q(n)}\sum_{P\in\mathcal{P}_{n}}d_{k}(P+\alpha)=\binom{n+k-1}{k-1}+O\left(q^{-\tfrac{1}{2}}\right)
\end{equation}
uniformly over all $0\neq\alpha\in \fq[t]$ of degree $\deg(\alpha)<n$.
\end{theorem}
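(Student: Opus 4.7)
My plan is to detect primality via the von Mangoldt function and reduce to divisor sums in arithmetic progressions, which are handled by Dirichlet character orthogonality and Weil's bound for $L$-functions. The starting point is the standard identity
\[
n\sum_{P\in\mathcal{P}_n}d_k(P+\alpha) = \sum_{f\in\mathcal{M}_n}\Lambda(f)\,d_k(f+\alpha) - \sum_{\substack{j\mid n\\ j\geq 2}}\tfrac{n}{j}\sum_{Q\in\mathcal{P}_{n/j}}d_k(Q^j+\alpha),
\]
where $\Lambda(f)=\deg P$ if $f=P^j$ is a prime power and $0$ otherwise. Since $d_k(g)=O_{n,k}(1)$ uniformly for $g\in\mathcal{M}_n$ and $\sum_{j\geq 2,\,j\mid n}|\mathcal{P}_{n/j}|=O(q^{n/2})$, the prime-power correction is $O_{n,k}(q^{n/2})$, which is absorbed after dividing by $\pi_q(n)\sim q^n/n$. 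It therefore suffices to show
\[
\frac{1}{q^n}\sum_{f\in\mathcal{M}_n}\Lambda(f)\,d_k(f+\alpha)=\binom{n+k-1}{k-1}+O_{n,k}(q^{-1/2}).
\]

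Next, I expand $\Lambda(f)=-\sum_{d\mid f}\mu(d)\deg d$ (valid for $\deg f\geq 1$) and interchange summations via $f=dg$:
\[
\sum_{f\in\mathcal{M}_n}\Lambda(f)d_k(f+\alpha) = -\sum_{\substack{d\text{ sqfree monic}\\ 1\leq\deg d\leq n}}\mu(d)\deg d\cdot S_d,\qquad S_d:=\sum_{\substack{h\in\mathcal{M}_n\\ h\equiv\alpha\pmod{d}}}d_k(h).
\]
Using the elementary evaluations $\sum_{d\in\mathcal{M}_1}\mu(d)=-q$ and $\sum_{d\in\mathcal{M}_j}\mu(d)=0$ for $j\geq 2$, substituting the heuristic $S_d\approx q^{n-\deg d}\binom{n+k-1}{k-1}$ leaves only the $\deg d=1$ contributions, which produce exactly the expected main term $q^n\binom{n+k-1}{k-1}$.

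The main obstacle is controlling the error $E_d:=S_d-q^{n-\deg d}\binom{n+k-1}{k-1}$ summed against $\mu(d)\deg d$. Opening $S_d$ by orthogonality of Dirichlet characters modulo $d$ (the case $\gcd(\alpha,d)>1$ forces $d\mid\alpha$ and is handled directly as a short sum), the principal character gives the main term plus an Euler-factor correction of size $O_{n,k}(q^{n-\deg d-1})$, while each non-trivial character $\chi$ contributes $[u^n]L(u,\chi)^k$, which by Weil's theorem is bounded by $O_{n,k}(q^{n/2})$ and vanishes identically when $k(\deg d-1)<n$. A naive character-by-character estimate is too weak once summed over $d$; the required savings must come from the Möbius cancellation, for instance by regrouping the $d$-sum according to the primitive character inducing each Dirichlet character. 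In the spirit of Theorem~\ref{thm:general case}, a cleaner alternative is to show that the Galois group of the splitting field of $f\cdot(f+\alpha)$ over $\overline{\FF_q}(c_0,\ldots,c_{n-1})$ is the full $S_n\times S_n$, so that Deligne's equidistribution yields the asymptotic independence of the events ``$f$ irreducible of degree $n$'' and ``$f+\alpha$ has a prescribed factorization type'' with rate $O(q^{-1/2})$; combined with Theorem~\ref{main thm} to evaluate $\E[d_k(f+\alpha)\mid f\text{ irreducible}]=\binom{n+k-1}{k-1}+O(q^{-1/2})$, this directly yields the theorem.
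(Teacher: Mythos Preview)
Your first approach, via $\Lambda$, M\"obius inversion, and Dirichlet characters, does not close. You correctly note that the non-principal character contribution $[u^n]L(u,\chi)^k$ is $O_{n,k}(q^{n/2})$ per character; summed over all $\chi\pmod d$ and then over squarefree $d$ with $\deg d$ up to $n$, this is far larger than $q^n$, and the M\"obius weight $\mu(d)\deg d$ does not by itself provide the needed cancellation among characters attached to \emph{different} moduli. Your suggestion to regroup by primitive inducing character is in effect a large-sieve or zero-density argument; carrying it out for $L(u,\chi)^k$ with $k\geq 3$ in this range is nontrivial and is not supplied. As written, this branch of the proposal is a sketch with an acknowledged gap, not a proof.

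The ``cleaner alternative'' in your final sentence is exactly the route the paper takes, and it is the one you should commit to. The paper isolates this as a standalone result (Theorem~\ref{cycle indep thm}): for distinct shifts $h_1,\dots,h_s$ the cycle structures of $f+h_1,\dots,f+h_s$ become independent up to $O(q^{-1/2})$, proved via the Galois group of $\prod_i(\F+h_i)$ over $\FF_q(\bfA)$ being $S_n^s$ together with an explicit Chebotarev theorem. Once that is in hand, the Titchmarsh theorem is a two-line deduction: irreducibility of $f$ is the event $\lambda(f)=(0,\dots,0,1)$, and $d_k(f+\alpha)=k^{|\lambda(f+\alpha)|}$ for squarefree $f+\alpha$; applying Theorem~\ref{cycle indep thm} with $s=2$, $h_1=0$, $h_2=\alpha$ gives $\langle \chp(\bullet)\,k^{|\lambda(\bullet+\alpha)|}\rangle = \tfrac{1}{n}\binom{n+k-1}{k-1}+O(q^{-1/2})$, and dividing by $\pi_q(n)/q^n = 1/n+O(q^{-1})$ finishes. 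Your detour through $\Lambda$ and characters is unnecessary once the independence theorem is available: primality is already a cycle-structure event, so there is no need to linearize it via M\"obius.
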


For the standard divisor function ($k=2$) we find
\begin{equation}
\label{eq:Tit d2}
\sum_{P\in\mathcal{P}_{n}}d_{2}(P+\alpha)=q^{n}+\frac{q^{n}}{n}+O\left(q^{n-\tfrac{1}{2}}\right),
\end{equation}
which is analogous to \eqref{eq:Tit3} under the correspondence
$x\leftrightarrow q^{n}$, $\log x\leftrightarrow n$.

\subsection{Independence of cycle structure of shifted polynomials}
We conclude the introduction with a discussion on the connection
between shifted polynomials and random permutations and state a 
result that lies behind the results stated above.

The cycle structure of a permutation $\sigma$ of $n$ letters is the
partition $\lambda(\sigma) = (\lambda_1,\dots, \lambda_n)$ of $n$ if
in the decomposition of $\sigma$ as a product of disjoint cycles,
there are
 $\lambda_j$ cycles of length $j$. Note that $\lambda(\sigma)$ is a partition of $n$ is the sense that $\lambda_j\geq 0$ and $\sum_{j} j \lambda_j = n$. For example, $\lambda_1$ is the number of fixed points of $\sigma$ and $\lambda_n=1$ if and only if $\sigma$ is an $n$-cycle.

For each partition $\lambda \vdash n$, the probability that a random
permutation on $n$ letters has cycle structure $\sigma$ is given by
Cauchy's formula \cite[Chapter 1]{ABT}:
\begin{equation}\label{def p}
p(\lambda) = \frac{\#\{\sigma\in S_n: \lambda(\sigma)=
\lambda\}}{\#S_n } = \prod_{j=1}^n \frac 1{j^{\lambda_j} \cdot
\lambda_j!}.
\end{equation}

For $f\in \fq[t]$ of positive degree $n$, we say its cycle structure
is $\lambda(f) = (\lambda_1,\dots, \lambda_n)$ if in the prime
decomposition $f=\prod_j P_j$ (we allow repetition), we have $\#\{i:
\deg(P_i)=j\} = \lambda_j$. Thus we get a partition of $n$. In
analogy with permutation, $\lambda_1(f)$ is the number of roots of
$f$ in $\fq$ (with multiplicity) and $f$ is irreducible if and only
if $\lambda_n(f) = 1$.

For a partition $\lambda\vdash n$, we let $\chi_{\lambda}$ be the
characteristic function of $f\in \EM_n$ of cycle structure
$\lambda$:
\begin{equation}
\chi_\lambda(f)=\begin{cases} 1, & \lambda(f) = \lambda\\ 0,
&\mbox{otherwise}.\end{cases}
\end{equation}
The Prime Polynomial Theorem gives the mean values of
$\chi_\lambda$:
\begin{equation}\label{number of pols with cycle}
\frac{1}{q^n} \sum_{f\in \EM_n} \chi_{\lambda} (f) = p(\lambda) +
O\left(q^{-1}\right)
\end{equation}
as $q\to \infty$ (see Lemma~\ref{lem:number of pols with cycle}). We
prove independence of cycle structure of shifted polynomials:

\begin{theorem}\label{cycle indep thm}
For fixed positive integers $n$ and $s$ we have
\[
\frac{1}{q^n}\sum_{f\in \EM_n} \chi_{\lambda_1}(f+h_1) \cdots
\chi_{\lambda_s}(f+h_s) = p(\lambda_1) \cdots p(\lambda_s)
+O\left(q^{-\frac{1}{2}}\right),
\]
uniformly for all $h_1, \ldots, h_s$ distinct polynomials in $\fq[t]$ of degrees
$\deg(h_i)<n$ and on all partitions $\lambda_1, \ldots,
\lambda_s\vdash n$ as $q\to \infty$.
\end{theorem}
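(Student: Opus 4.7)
The plan is to interpret the sum geometrically and appeal to the Chebotarev--Deligne equidistribution theorem for function fields. Parametrize monic degree-$n$ polynomials by $\mathbf{a}=(a_0,\dots,a_{n-1})\in\mathbb{A}^n$ via $f(t) = t^n+a_{n-1}t^{n-1}+\cdots+a_0$, and set $F_i(\mathbf{a},t) = f(t)+h_i(t)$. On the open subset $U\subset\mathbb{A}^n$ where $\prod_{i=1}^s F_i$ is separable in $t$, the roots of each $F_i$ define a finite \'etale cover of $U$, and the conjugacy class of Frobenius acting on the fibre over a point $\mathbf{a}_0\in U(\fq)$ is precisely the cycle structure of the factorization of $F_i(\mathbf{a}_0,t)\in\fq[t]$. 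Thus the summand $\chi_{\lambda_1}(f+h_1)\cdots\chi_{\lambda_s}(f+h_s)$ is the characteristic function of a joint Frobenius conjugacy-class condition in the geometric monodromy group of the composite cover.

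Let $G\subseteq S_n^s$ denote the geometric Galois group of the splitting field of $\prod_i F_i$ over $\overline{\fq}(\mathbf{a})$, acting on the disjoint union of the $sn$ roots. The function-field Chebotarev theorem (via Deligne) then yields
\[
\#\{\mathbf{a}_0\in U(\fq) : \lambda(F_i(\mathbf{a}_0,\cdot))=\lambda_i\ \forall i\} \;=\; q^n\cdot\frac{|C\cap G|}{|G|} + O(q^{n-\frac{1}{2}}),
\]
where $C\subset S_n^s$ is the set of tuples with cycle types $(\lambda_1,\dots,\lambda_s)$; the implied constant depends only on standard geometric invariants (Betti numbers of the cover) bounded in terms of $n$ and $s$, hence uniformly in $\mathbf{h}$ and $q$. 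The discarded set $\mathcal{M}_n\setminus U(\fq)$ lies in the zero locus of a non-zero polynomial in $\mathbf{a}$ (a product of discriminants and resultants) whose degree is bounded in $n,s$, and so contributes only $O(q^{n-1})$. Assuming $G=S_n^s$, Cauchy's formula \eqref{def p} gives $|C\cap G|/|G|=\prod_i p(\lambda_i)$, and dividing by $q^n$ yields the claim.

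The heart of the proof, and the main obstacle, is the algebraic claim $G=S_n^s$. Each projection $G\to S_n$ to the $i$-th factor is surjective because $F_i$ is a \emph{generic} polynomial: the translation $\mathbf{a}\mapsto\mathbf{a}-(\text{coefficient vector of }h_i)$ makes its coefficients in $t$ algebraically independent over $\overline{\fq}$, so its Galois group is all of $S_n$. For the product structure I would apply Goursat's lemma pairwise. For $n\geq 5$, the only proper non-trivial normal subgroup of $S_n$ is $A_n$, so a failure of linear disjointness between the $i$-th and $j$-th covers would force either an $S_n$-equivariant isomorphism of their splitting fields or that $\disc(F_i)\cdot\disc(F_j)$ be a square in $\overline{\fq}(\mathbf{a})$. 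The first is ruled out because $h_i\neq h_j$ forces the two tautological root-subschemes of $U\times\mathbb{A}^1$ to differ as closed subsets; the second by a direct specialization of $\mathbf{a}$ at which one discriminant is a non-square and the other is a square. The small cases $n\in\{2,3,4\}$, where $S_n$ carries extra normal subgroups (notably the Klein four-group in $S_4$), require a separate but elementary pairwise analysis, and the uniformity of the monodromy bounds entering Deligne's theorem is routine since the complexity of the covers depends only on $n$ and $s$.
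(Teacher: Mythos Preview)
Your overall architecture is exactly the paper's: parametrize $\EM_n$ by the coefficient space $\mathbb{A}^n$, set $\F_i(\bfA,T)=T^n+A_1T^{n-1}+\cdots+A_n+h_i(T)$, apply an explicit function-field Chebotarev theorem to the splitting cover of $\prod_i\F_i$, and reduce everything to the single algebraic claim $\Gal(\prod_i\F_i,\overline{\FF}_q(\bfA))=S_n^s$. The paper does not prove that last claim in-line; it quotes \cite[Proposition~3.1]{BS} (and \cite{Carmon} to remove the odd-characteristic hypothesis when $n>2$), and then checks that the arithmetic and geometric Galois groups coincide so that its Chebotarev theorem (Theorem~\ref{thm:cheb}) applies with $C$ a genuine conjugacy class.

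The gap in your proposal is therefore entirely in your sketch of the $S_n^s$ computation. Three points would fail or remain unjustified as written. First, ``pairwise Goursat'' does not by itself give the full product for $s\geq 3$: even for $n\geq 5$, after you know each pairwise projection hits $S_n\times S_n$, the remaining obstruction is that $\prod_{i\in I}\disc\F_i$ be a nonsquare in $\overline{\FF}_q(\bfA)$ for \emph{every} nonempty $I\subseteq\{1,\dots,s\}$, not just $|I|=2$; your single ``direct specialization'' does not address this, and some inductive or uniform argument (as in \cite{BS}) is required. Second, your discriminant-square test is vacuous in characteristic~$2$; the paper explicitly needs \cite{Carmon} to handle this, and your sketch offers no substitute. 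Third, ``separate but elementary'' for $n\in\{2,3,4\}$ is not a proof: for $n=2$ one must show that the $s$ discriminants are multiplicatively independent modulo squares (or the Artin--Schreier analogue in characteristic~$2$), and for $n=4$ the Klein four quotient genuinely changes the Goursat analysis. None of this is fatal to your strategy---it is precisely the content of the references the paper cites---but as written the monodromy step is asserted rather than established.
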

\begin{remark}
In this theorem $\lambda_{1},\cdots,\lambda_{s}$ are partitions of $n$ and are not the same as the $\lambda_{1},\cdots,\lambda_{n}$ that appears on the definition of $\lambda(f)$ or $\lambda(\sigma)$ where in that case the $\lambda_{i}$'s are the number of parts of length $i$.
\end{remark}


We note that the statistic of Theorem~\ref{cycle indep thm} is
induced from the statistics of cycles structure of tuples of
elements in the direct product $S_n^s$ of $s$ copies of the
symmetric group on $n$ letters $S_n$. This plays a role in the
proof, where we use that a certain Galois group is $S_n^s$
\cite{BS}, and we derive the statistic from an explicit Chebotarev
theorem. Since we have not found the exact formulation that we need
in the literature, we provide a proof in the Appendix.



\section{Mean values}\label{sec:prelim}
For the reader's convenience, we prove in this section 
some results for which we did not find a good reference. 
We  define the \textbf{norm} of a nonzero polynomial
$f\in\fq[t]$ to be $|f|=q^{\mathrm{deg}(f)}$ and set $|0|=0$.

We start by proving \eqref{number of pols with cycle}:

\begin{lem}\label{lem:number of pols with cycle}
If $\lambda \vdash n$ is a partition of $n$ and $n$ is a fixed
number then
\begin{equation}\label{number of pols with cycle2}
\frac 1{q^n} \#\{f\in \EM_n: \lambda(f) = \lambda\}  =
p(\lambda)(1+O(q^{-1}))
\end{equation}
as $q\rightarrow\infty$.
\end{lem}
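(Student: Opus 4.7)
The plan is to directly count the polynomials with a given cycle structure by expressing such polynomials as products of irreducibles of prescribed degrees, then use the Prime Polynomial Theorem on each factor.

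First I would observe that a monic polynomial $f\in\EM_n$ has cycle structure $\lambda=(\lambda_1,\dots,\lambda_n)$ if and only if, in its unique factorization $f=\prod_j \prod_{i=1}^{\lambda_j} P_{j,i}$, the multiset $\{P_{j,1},\dots,P_{j,\lambda_j}\}$ consists of (not necessarily distinct) monic irreducibles of degree $j$, for each $j$. Since choosing an unordered multiset of size $\lambda_j$ from a set of $\pi_q(j)$ elements can be done in $\binom{\pi_q(j)+\lambda_j-1}{\lambda_j}$ ways, this gives the exact formula
\[
\#\{f\in \EM_n : \lambda(f)=\lambda\} \;=\; \prod_{j=1}^n \binom{\pi_q(j)+\lambda_j-1}{\lambda_j}.
\]

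Next I would invoke the Prime Polynomial Theorem in the form $\pi_q(j) = q^j/j + O(q^{j/2}/j)$, noting in particular that for $j\geq 2$ the relative error satisfies $\pi_q(j) = (q^j/j)(1+O(q^{-1}))$ (for $j=1$ the identity $\pi_q(1)=q$ is exact). Since $\lambda_j$ is a fixed nonnegative integer and $\binom{N+\lambda_j-1}{\lambda_j}$ is a polynomial in $N$ of degree $\lambda_j$ with leading coefficient $1/\lambda_j!$, we have
\[
\binom{\pi_q(j)+\lambda_j-1}{\lambda_j} \;=\; \frac{\pi_q(j)^{\lambda_j}}{\lambda_j!}\bigl(1+O(\pi_q(j)^{-1})\bigr) \;=\; \frac{(q^j/j)^{\lambda_j}}{\lambda_j!}\bigl(1+O(q^{-1})\bigr),
\]
using that $\pi_q(j)^{-1}\ll q^{-1}$ for every $j\geq 1$.

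Finally I would take the product over $j=1,\dots,n$, using $\sum_j j\lambda_j=n$, to obtain
\[
\#\{f\in \EM_n : \lambda(f)=\lambda\} \;=\; q^n\prod_{j=1}^n\frac{1}{j^{\lambda_j}\lambda_j!}\bigl(1+O(q^{-1})\bigr) \;=\; q^n\, p(\lambda)\bigl(1+O(q^{-1})\bigr),
\]
which is the desired formula after dividing by $q^n$. There is essentially no obstacle here: the argument is elementary combinatorics plus an application of the Prime Polynomial Theorem term by term. The only point requiring any care is observing that the fixed-$n$ hypothesis means only finitely many factors appear and the implicit constant may depend on $n$ and on $\lambda$, which is consistent with the statement.
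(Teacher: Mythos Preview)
Your proof is correct and follows essentially the same approach as the paper: count polynomials with a given cycle structure via their prime factorizations and apply the Prime Polynomial Theorem factor by factor. The only (cosmetic) difference is that you begin from the exact multiset count $\prod_j \binom{\pi_q(j)+\lambda_j-1}{\lambda_j}$ and then approximate, whereas the paper writes the approximation $\prod_j \pi_q(j)^{\lambda_j}/\lambda_j!\cdot(1+O(q^{-1}))$ directly.
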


\begin{proof}
To see this, note that to get a monic polynomial with cycle
structure $\lambda$, we pick any $\lambda_1$ primes of degree $1$,
$\lambda_2$ primes of degree $2$, (irrespective of the choice of
ordering), and multiply them together. Thus

\begin{equation}
 \#\{f\in \EM_n: \lambda(f) = \lambda\}  = \prod_{j=1}^n
 \frac{\pi_A(j)^{\lambda_j}}{\lambda_j!}\Big(1+O(\frac 1q) \Big)
 \end{equation}
where $\pi_{A}(j)$ is the number of primes of degree $j$ in
$A=\fq[t]$. By the Prime Polynomial Theorem,  $\pi_{A}(j) = \frac
{q^j}{j} + O(\frac{q^{j/2}}{j})$ whenever $j\geq 2$ and $\pi_A(1) =
q$. Hence $\pi_A(j) = \frac{q^j}{j}+O(\frac{q^{j-1}}{j})$. So
 \begin{equation}
 \begin{split}
\#\{f\in \EM_n: \lambda(f) = \lambda\}
    &= \prod_{j=1}^n \frac1{\lambda_j!} \left(\frac{q^j}{j} + O\left(\frac{q^{j-1}}{j}\right)\right)^{\lambda_j}\\
    & = q^{\sum
j\lambda_j} \prod_{j=1}^n\frac 1{j^{\lambda_j} \cdot \lambda_j!}
(1+O(q^{-1}))
\end{split}
\end{equation}
which by \eqref{def p} gives \eqref{number of pols with cycle2}.
\end{proof}
Next we prove \eqref{eq:mvdiv_k}:
\begin{lemma}\label{lem:mean value of tau}
The mean value of $\divid_k(f)$ is
\begin{equation}\label{eq:mean value of tau}
\frac 1{q^n} \sum_{f\in \EM_n} \divid_k(f) =\binom{n+k-1}{k-1}.
\end{equation}
\end{lemma}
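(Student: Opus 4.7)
The plan is to exploit the combinatorial definition of $\divid_k$ directly: switching the order of summation converts the sum $\sum_{f\in \EM_n}\divid_k(f)$ into a count of $k$-tuples of monic polynomials.

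More precisely, I would first write
\[
\sum_{f\in \EM_n}\divid_k(f) = \sum_{f\in \EM_n}\#\{(a_1,\dots,a_k)\in \EM^k : a_1\cdots a_k=f\} = \#\{(a_1,\dots,a_k)\in \EM^k : \deg(a_1\cdots a_k)=n\},
\]
where $\EM = \bigcup_{d\geq 0}\EM_d$ is the set of all monic polynomials. Then I would partition by the degree vector $(d_1,\dots,d_k)$ with $d_i=\deg(a_i)\geq 0$ and $d_1+\cdots+d_k=n$. Since $\#\EM_{d_i}=q^{d_i}$, the count factorizes as $\prod_i q^{d_i}=q^n$, independent of the particular composition. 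Hence
\[
\sum_{f\in \EM_n}\divid_k(f) = q^n\cdot \#\{(d_1,\dots,d_k)\in \Z_{\geq 0}^k : d_1+\cdots+d_k=n\} = q^n\binom{n+k-1}{k-1},
\]
using the standard stars-and-bars count. Dividing by $q^n$ yields the claim.

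There is no real obstacle here; the identity is exact, not asymptotic, and the only thing to keep track of is the convention that $1\in \EM_0$ (so the empty-degree slot is allowed), which matches the definition of $\divid_k$ in the paper that permits $a_i=1$. As a sanity check, one may equivalently argue via the generating function identity $\sum_{f\in \EM}u^{\deg f}=(1-qu)^{-1}$, which gives $\sum_n\bigl(\sum_{f\in \EM_n}\divid_k(f)\bigr)u^n=(1-qu)^{-k}$, and the binomial series extracts the coefficient $q^n\binom{n+k-1}{k-1}$.
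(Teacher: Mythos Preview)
Your proof is correct. Your primary argument is a direct combinatorial count (switch summation, partition by degree vector, stars-and-bars), whereas the paper's proof is your ``sanity check'': it identifies $\sum_{f\in\EM_n}\divid_k(f)$ as the coefficient of $u^n$ in $Z(u)^k=(1-qu)^{-k}$ and reads off $q^n\binom{n+k-1}{k-1}$ from the binomial series. The two approaches are of course equivalent --- your stars-and-bars is exactly the Cauchy product underlying the power series identity --- but your direct count is arguably more transparent here since it avoids introducing the zeta function for what is a one-line combinatorial identity, while the paper's route has the (minor) advantage of fitting into the generating-function framework used elsewhere in analytic number theory over $\fq[t]$.
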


\begin{proof}
The generating function for $\divid_k(f)$ is the $k$-th power of the
zeta function associated to the polynomial ring $\fq[t]$
\begin{equation}\label{gen fn od div}
Z(u)^k= \sum_{f\;{\rm monic}} \divid_k(f)u^{\deg f} =
\sum_{n=0}^{\infty} \sum_{f\in \EM_n} d_k(f) u^n.
\end{equation}
Here
\begin{equation}
Z(u) =  \sum_{f\;{\rm monic}} u^{\deg f} = \sum_{n=0}^{\infty} q^n
u^n = \frac 1{1-qu} .
\end{equation}
Using the Taylor expansion
\begin{equation}
\frac 1{(1-x)^k} = \sum_{n=0}^\infty \binom{n+k-1}{k-1} x^n
\end{equation}
and comparing the  coefficients of $u^n$ in \eqref{gen fn od div}
give
\begin{equation}
q^n \binom{n+k-1}{k-1} = \sum_{f\in \EM_n} \divid_k(f),
\end{equation}
as needed.
\end{proof}

\section{Proof of Theorem~\ref{cycle indep thm}}
In the course of the proof we shall use the following explicit
Chebotarev theorem which is a special case of
Theorem~\ref{thm:Cheb-gen} of Appendix~\ref{apendix}:

\begin{theorem}\label{thm:cheb}
Let $\bfA=(A_1, \ldots, A_n)$ be an $n$-tuple of variables over
$\FF_q$, let $\F(t) \in \FF_q[\bfA][t]$ be monic, separable, and of
degree $m$ viewed as a polynomial in $t$, let $L$ be a splitting
field of $\F$ over $K=\FF_q(\bfA)$, and let
$G=\Gal(\F,K)=\Gal(L/K)$. Assume that $\FF_q$ is algebraically
closed in $L$. Then there exists a constant $c=c(n,{\rm tot.deg}
(\F))$ such that for every conjugacy class $C\subseteq G$ we have
\[
\left|\# \{ \bfa\in \FF_q^n : \Fr_\bfa = C\} -\frac{|C|}{|G|}
q^n\right|\leq c q^{n-1/2}.
\]

\end{theorem}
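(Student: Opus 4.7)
The plan is to follow the standard strategy for explicit Chebotarev over function fields, combining a character-theoretic Fourier expansion on $G$ with geometric point-counting via the Lang-Weil and Deligne bounds.

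First I would isolate the good locus. Let $U \subseteq \mathbb{A}^n_{\FF_q}$ be the Zariski open set on which the specialization $\F_\bfa(t) := \F(\bfa,t)$ remains separable of degree $m$; concretely, $U$ is the non-vanishing locus of the $t$-discriminant of $\F$, a polynomial in $\bfA$ whose degree is bounded in terms of ${\rm tot.deg}(\F)$. By the Schwartz-Zippel estimate (or Lang-Weil), $|\FF_q^n \setminus U(\FF_q)| = O(q^{n-1})$, which is absorbed by the target error. Let $Y \to U$ be the finite \'etale Galois cover with group $G$ obtained by normalizing $U$ inside $L$; the hypothesis that $\FF_q$ is algebraically closed in $L$ is precisely the statement that $Y$ is geometrically connected, so the geometric monodromy coincides with the arithmetic $G$. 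For $\bfa \in U(\FF_q)$, the Frobenius conjugacy class $\Fr_\bfa \in G$ is then well-defined.

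Next I would apply character orthogonality, writing
\[
\mathbf 1_C(g) = \frac{|C|}{|G|} \sum_{\chi \in \widehat{G}} \overline{\chi(C)}\, \chi(g),
\]
which reduces the count to bounding the character sums $S(\chi) := \sum_{\bfa \in U(\FF_q)} \chi(\Fr_\bfa)$ for every irreducible character $\chi$. The trivial character contributes $|U(\FF_q)| = q^n + O(q^{n-1/2})$ by Lang-Weil, which after multiplication by $|C|/|G|$ is the advertised main term. For each non-trivial $\chi$, I associate the lisse $\overline{\mathbb Q}_\ell$-sheaf $\mathcal L_\chi$ on $U$ obtained by pushing forward the constant sheaf on $Y$ and extracting the $\chi$-isotypic component; because the geometric monodromy equals $G$, there are no $G$-invariants and $H^{2n}_c(U_{\overline{\FF}_q}, \mathcal L_\chi) = 0$, so Grothendieck-Lefschetz gives
\[
S(\chi) = \sum_{i=0}^{2n-1} (-1)^i \tr\bigl(\Fr_q \mid H^i_c(U_{\overline{\FF}_q}, \mathcal L_\chi)\bigr).
\]

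Finally, I would invoke Deligne's Weil~II: since $\mathcal L_\chi$ is pure of weight $0$, each $H^i_c$ is mixed of weights $\leq i$, so $|\tr(\Fr_q \mid H^i_c)| \leq \dim H^i_c \cdot q^{i/2}$ for $i \leq 2n-1$. The Betti numbers $\dim H^i_c(U_{\overline{\FF}_q}, \mathcal L_\chi)$ are controlled by the rank of $\mathcal L_\chi$ (at most $|G|$) and by the geometry of $U$ (through $n$ and ${\rm tot.deg}(\F)$) via a standard Katz-style bound for affine varieties. Summing the resulting $O(q^{n-1/2})$ contributions over $\chi \neq 1$, weighted by $\overline{\chi(C)}$ and absorbed using $\sum_\chi |\chi(C)|^2 = |G|/|C|$, yields a total error of size $O(q^{n-1/2})$ with constant depending only on $n$ and ${\rm tot.deg}(\F)$, as required. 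The main obstacle is the uniform Betti bound in the last step; a more elementary alternative, often preferred in appendices of this flavor, is to fiber $\mathbb A^n$ by a generic pencil of affine lines and reduce to the case $n=1$, where the Weil bound for curves applied to the intermediate covers $Y/H$ replaces the \'etale-cohomology input.
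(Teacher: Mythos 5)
Your outline is correct in broad strokes, but it follows a genuinely different route from the paper's. You take the Fourier-theoretic path: expand $\mathbf 1_C$ in irreducible characters of $G$, interpret $\chi(\Fr_\bfa)$ as the trace of Frobenius on a lisse $\overline{\Q}_\ell$-sheaf $\mathcal L_\chi$ on $U$, and bound the nontrivial terms via Grothendieck--Lefschetz together with Deligne's Weil~II. The paper's appendix avoids character theory, $\ell$-adic sheaves and Weil~II entirely. Its central device is Lemma~\ref{lem:matrix}: for each $g\in G$ one builds an explicit block-Vandermonde matrix $V=V_{g,\nu}\in\GL_m(\FF)$ on whose rows $\Fr_{q^\nu}$ acts exactly as $g$ acts on $\bfY$, so that by \eqref{eq:frob-point} the condition $\left[\tfrac{S/R}{\Phi}\right]=g$ becomes the purely rational condition $\Phi(\bfZ)\in\FF_{q^\nu}^m$ with $\bfZ=V^{-1}\bfY$. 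The count of liftings $\Phi$ with Frobenius $g$ is therefore \emph{identically} the number of $\FF_{q^\nu}$-points of the finite cover $S'=R[\bfZ]$, which has complexity bounded in terms of $\comp(R)$ and $\deg\F$; one then verifies $S'\cap\FF\subseteq\FF_{q^\nu}$ so that a single application of Lang--Weil gives the count with the required uniformity, and Lemma~\ref{lem:fixed-phi}(4) converts from $\Phi$ back to $\phi$. What this buys is precisely the thing you flag as ``the main obstacle'': the paper never needs a uniform Betti-number bound for $H^i_c(U_{\overline{\FF}_q},\mathcal L_\chi)$.

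That Betti bound is a real gap in your write-up as it stands. Asserting that $\sum_i\dim H^i_c(U_{\overline{\FF}_q},\mathcal L_\chi)$ is bounded in terms of $n$ and ${\rm tot.deg}(\F)$ alone is true but not elementary; you would need to cite Katz's theorem on sums of Betti numbers in arbitrary characteristic (or an Adolphson--Sperber/Bombieri-type estimate) and verify it applies uniformly over the family, which is a substantial addition. Your fallback of slicing $\mathbb A^n$ by a generic pencil of lines to reduce to $n=1$ would also close the gap and is closer in spirit to the Lang--Weil input the paper uses, but note that it is still not the paper's argument; the Vandermonde change of variables, which turns ``Frobenius equals $g$'' into ``this finite cover has an $\FF_{q^\nu}$-point,'' is the distinctive idea of the appendix and makes the whole question a single point count rather than a sum over characters.
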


Here $\Fr_{\bfa}$ denotes the Frobenius conjugacy class
$\left(\frac{S/R}{\phi}\right)$ in $G$ associated to the
homomorphism  $\phi \colon R\to \FF_q$ given by $\bfA\mapsto \bfa\in
\FF_q^n$, where $R=\FF_q[\bfA,\disc\F^{-1}]$ and $S$ is the integral
closure of $R$ in the splitting field of $\F$. See
Appendix~\ref{apendix}, in  particular \eqref{frob cc}, for more
details.

Let $\bfA=(A_1, \ldots, A_n)$ be an $n$-tuple of variables and set
\begin{equation}
\F_i = T^n + A_1  T^{n-1} + \cdots + A_n + h_i(T) \quad \mbox{and}
\quad \F = \F_1\cdots \F_s,
\end{equation}
where the $h_{i}$'s are distinct polynomials.
Let $L$ be the splitting field of $\F$ over $K=\FF_q(\bfA)$ and let
$\FF$ be an algebraic closure of $\FF_q$. By \cite[Proposition
3.1]{BS},
\[
G:=\Gal(\F,K) = \Gal(L/K) = \Gal(\FF L / \FF K) = S_n^s.
\]
In \cite{BS} it is assumed that $q$ is odd, but using \cite{Carmon}
that restriction can now be removed for $n>2$. This in particular
implies that $L\cap \FF = \FF_q$ (since the image of the restriction
map $\Gal(\FF L/\FF K) \to \Gal(L/K) $ is $\Gal(L/L\cap \FF K)$, so
by the above and Galois correspondence $L\cap (\FF K) = K$, and in
particular $L\cap \FF=K\cap \FF=\FF_q$). Hence we may apply
Theorem~\ref{thm:cheb} with the conjugacy class
\[
C=\{ (\sigma_1, \ldots, \sigma_s)\in G : \lambda_{\sigma_i} =
\lambda_i\}
\]
to get that
\[
|\#\{\bfa\in \FF_{q}^n : \Fr_{\bfa} = C \} - |C|/|G| \cdot q^n|\leq
c(s,n) q^{n-1/2}.
\]
Since $|C|/|G| = p(\lambda_1) \cdots p(\lambda_s)$ and since
$\#\{\bfa\in \FF_{q}^n : \disc_T(\F)(\bfa)=0\} = O_{s,n}(q^{n-1})$,
it remains to show that for $\bfa\in \FF_q^n$ with
$\disc_T(\F(\bfa))\neq 0$ we have $\Fr_{\bfa} =C$ if and only if
$\lambda_{\F_i(\bfa,T)}=\lambda_i$ for all $i=1, \ldots, s$.

And indeed, extend the specialization $\bfA\mapsto \bfa$ to a
homomorphism $\Phi$ of $\FF_q[\bfA,\bfY]$ to $\FF$, where
$\bfY=(Y_{ij})$, and $Y_{i1}, \ldots, Y_{in}$ are the roots of
$\F_i$. Then $\Fr_{\bfa}$ is, by definition, the conjugacy class of
the Frobenius element $\Fr_{\Phi}\in G$ which is defined by
\begin{equation}\label{action of frob}
\Phi(\Fr_{\Phi}(Y_{ij}))=\Phi(Y_{ij})^q.
\end{equation}
Note that $\Fr_\Phi$ permutes the roots of each $\mathcal F_i$ and
hence can be identified with a $s$-tuple of permutations $\Fr_\phi =
(\sigma_1,\ldots, \sigma_s)\in G=S_n^s$. Since the $\Phi(Y_{ij})$
are distinct,  the cycle structure of $\sigma_i$ equals the cycle
structure of the $\Phi(Y_{ij})\to \Phi(Y_{ij})^q$, $j=1,\ldots, n$
by \eqref{action of frob} which in turn equals the cycle structure
of the polynomial $\F_i(\bfa,T)$.  Hence $\Fr_{\Phi}\in C$ if and
only if $\lambda_{\F_i(\bfa,T)}=\lambda_i$ for all $i$, as needed.
\qed


\section{Proof of Theorem~\ref{main thm}}\label{sec:prfmainthm}

First we need the following lemma:

\begin{lemma}
Let $f\in\mathcal{M}_{n}$ and $h\in\mathbb{F}_{q}[t]$ such that $\text{deg}(h)<n$. Then we have that 
\begin{equation}
\#\left\{\text{$f\in\mathcal{M}_{n}$:$f$ and $f+h$ are square-free}\right\}=q^{n}+O(q^{n-1}).
\end{equation}
\end{lemma}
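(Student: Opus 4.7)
The plan is to reduce to the classical Carlitz count of square-free monics and then apply a trivial union bound. Recall that for $n\geq 2$ the number of square-free monic polynomials in $\fq[t]$ of degree $n$ equals $q^n - q^{n-1}$ (for $n=0,1$ every monic is square-free and the statement of the lemma is trivial). Equivalently, the number of \emph{non}-square-free monic polynomials of degree $n$ is exactly $q^{n-1}$.

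First I would observe that the hypothesis $\deg(h)<n$ ensures that for every $f\in\mathcal{M}_n$ the shift $f+h$ is again monic of degree exactly $n$, and that the translation map $f\mapsto f+h$ is a bijection of $\mathcal{M}_n$ onto itself. Let
\[
B_1=\{f\in\mathcal{M}_n : f \text{ is not square-free}\},\qquad B_2=\{f\in\mathcal{M}_n : f+h \text{ is not square-free}\}.
\]
The Carlitz count gives $|B_1|=q^{n-1}$ directly, and pushing this count through the bijection $f\mapsto f+h$ gives $|B_2|=q^{n-1}$ as well.

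A union bound then yields $|B_1\cup B_2|\leq 2q^{n-1}$, so the number of $f\in\mathcal{M}_n$ for which both $f$ and $f+h$ are square-free equals
\[
q^n - |B_1\cup B_2| = q^n + O(q^{n-1}),
\]
which is the claim. There is essentially no obstacle: the whole argument uses only the elementary square-free count and the translation bijection. One could alternatively expand via the Möbius identity $\mathbf{1}_{\mathrm{sqf}}(f)=\sum_{d^2\mid f}\mu(d)$ and use a CRT/compatibility analysis on the pair $(d_1^2,d_2^2)$ dividing $(f,f+h)$; this would recover the same $O(q^{n-1})$ (even with an explicit constant depending on the radical of $h$), but is unnecessary for the stated qualitative bound.
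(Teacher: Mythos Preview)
Your proof is correct and is essentially identical to the paper's argument: both use the Carlitz count $q^n-q^{n-1}$ for square-free monics, the bijection $f\mapsto f+h$ on $\mathcal{M}_n$, and a union bound to conclude that at most $2q^{n-1}$ polynomials fail to have both $f$ and $f+h$ square-free.
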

\begin{proof}
The number of square-free $f\in \EM_n$ is $q^n-q^{n-1}$ for
$n\geq 2$ (for $n=1$ it is $q$), and since $n>\deg(h)$, as $f$ runs
over all monic polynomials of degree $n$ so does $f+h$, and hence
the number of $f\in \EM_n$ such that $f+h$ is square-free is also
$q^n-q^{n-1}$. Therefore there are at most $2q^{n-1}$ monic $f\in
\EM_n$ for which at least one of $f$, $f+h$ is not square-free, as
claimed.
\end{proof}

We denote by $\ave{A}$ the mean value of an arithmetic function $A$
over $\EM_n$:
\begin{equation}
\ave{A}:=\frac 1{q^n} \sum_{f\in \EM_n} A(f) \;.
\end{equation}

For this it follows that if $A$ is an arithmetic function on $\EM_n$
that is bounded independently of $q$, then
\begin{equation}\label{eq:squarefreeneglection}
\langle A \rangle = \frac{1}{q^n} \sum_{\substack{f\in \EM_n\\ f \;
\text{and} \; f+h \; \text{square-free} }}A(f) +O\big(q^{n-1}\big).
\end{equation}

Now for square-free $f$, the divisor function $\divid_k(f)$ depends
only on the cycle structure of $f$, namely
\begin{equation}\label{eq:div-cyc}
\divid_k(f) = k^{|\lambda(f)|},
\end{equation}
where for a partition $\lambda=(\lambda_1,\dots, \lambda_n)$ of $n$,
we denote by $|\lambda| = \sum \lambda_j$ the number of part of
$\lambda$. Therefore we may apply
\eqref{eq:squarefreeneglection} with \eqref{eq:div-cyc} to get
\begin{equation}\label{eq:divid_ave}
\left\langle d_{k}(\bullet) d_{k}(\bullet+h)\right \rangle =
\left\langle k^{|\lambda(\bullet)|} k^{|\lambda(\bullet+h)|} \right
\rangle + O(q^{-1}).
\end{equation}
Since the function $k^{\lambda(f)}$  depends only on the cycle
structure of $f$, it follows from Theorem~\ref{cycle indep thm} that
\begin{equation}\label{eq:divid_indep}
\left\langle k^{|\lambda(\bullet)|} k^{|\lambda(\bullet+h)|} \right
\rangle = \left\langle k^{|\lambda(\bullet)|} \right \rangle
\left\langle
 k^{|\lambda(\bullet+h)|}
\right \rangle +O(q^{-1/2}) = \left\langle k^{|\lambda(\bullet)|}
\right \rangle^{2} + O(q^{-1/2}).
\end{equation}
Applying again  \eqref{eq:squarefreeneglection} with
\eqref{eq:div-cyc}  together with Lemma~\ref{lem:mean value of tau}
we conclude that
\begin{equation}\label{eq:mv-sfdivid}
\left\langle k^{|\lambda(\bullet)|} \right \rangle = \left\langle
 \divid_k(\bullet)  \right \rangle + O(q^{-1}) = \binom{n+k-1}{k-1}
+O(q^{-1}) .
\end{equation}
Combining \eqref{eq:divid_ave}, \eqref{eq:divid_indep}, and
\eqref{eq:mv-sfdivid} then gives the desired result. \qed

\section{Proof of Theorem \ref{thm:general case}}
We argue as in Section~\ref{sec:prfmainthm}:
\begin{align*}
\left< \prod_{i=1}^s d_{k_i}(\bullet + h_i) \right> &= \left< \prod_{i=1}^s k_i^{|\lambda(\bullet+h_i)|}\right> + O(q^{-1})\\
        &=\prod_{i=1}^s \left<k_i^{|\lambda_i(\bullet)|} \right> + O(q^{-1/2})\\
        &=\prod_{i=1}^s \binom{n+k_i-1}{k_i-1} + O(q^{-1/2}).
\end{align*}
(Here the first passage  uses \eqref{eq:squarefreeneglection} with
\eqref{eq:div-cyc}, the last also uses Lemma~\ref{lem:mean value of
tau}, and the middle passage is done by invoking Theorem~\ref{cycle
indep thm}.)\qed


\section{Proof of Theorem \ref{TitDiv for P}}
Let $\chp$ be the characteristic function of the primes of degree
$n$, i.e.\
\begin{equation}
\chp(f) = \chi_{(0,0,\ldots,0,1)}(f) = \left\{
    \begin{array}{ll}
        1,  & \ \mbox{if } \ f\in \mathcal{P}_n \\
        0, & \ \mbox{otherwise}.
    \end{array}
\right.
\end{equation}
The Prime Polynomial Theorem gives that
$\left<\chp\right>=1/n+O(q^{-1})$ and we have calculated in
Section~\ref{sec:prfmainthm} that
$\left<k^{|\lambda(\bullet)|}\right>=\binom{n+k-1}{k-1}+O(q^{-1})$.
Since these two functions clearly  depend only on cycle structures (recall that $\alpha\neq0$),
Theorem~\ref{cycle indep thm} gives
\begin{equation}
\left< \chp(\bullet)\cdot k^{|\lambda(\bullet)|}\right> =
\left<\chp(\bullet)\right>\left<k^{|\lambda(\bullet)|}\right> =
\frac{1}{n}\binom{n+k-1}{k-1} + O(q^{-1/2}).
\end{equation}
Therefore,
\begin{align*}
\frac{n}{q^n}\sum_{P\in \mathcal{P}_n} d_k(P+\alpha) &= n \left< \chp(\bullet)\cdot k^{|\lambda(\bullet)|} \right>\\
        & = \binom{n+k-1}{k-1} + O(q^{-1/2}),
\end{align*}
as needed. \qed


\section{Comparing conjectures and our results}
\label{sec:comparison}

In this section we check the compatibility of the theorems presented
in Section \S~\ref{sec: additive divisor problem} with the known
results over the integers.

\subsection{Estermann's theorem for $\fq[t]$}
First we prove the function field analogue of Estermann's result
\eqref{eq:Estermann}. For simplicity, we carry it out for  $h=1$.

\begin{theorem}\label{function field Estermann}
Assume that $n\geq1$. Then

\begin{equation}
\label{eq: ff Estermann}
\frac{1}{q^{n}}\sum_{f\in\mathcal{M}_{n}}d_{2}(f)d_{2}(f+1)=(n+1)^{2}-\frac{1}{q}(n-1)^{2}.
\end{equation}
(Note that $q$ is fixed in this theorem).
\end{theorem}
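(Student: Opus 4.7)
The plan is to compute $S_n := \sum_{f \in \EM_n} d_2(f) d_2(f+1)$ exactly by unfolding the two divisor functions and then exploiting a symmetry available because both $g = f+1$ and $g - 1 = f$ lie in $\EM_n$ for $n \geq 1$.

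For $h \in \EM_n$ and $0 \leq k \leq n$ let $d_2^{(k)}(h)$ count the monic divisors of $h$ of degree exactly $k$. Substituting $g = f+1$ (a bijection $\EM_n \to \EM_n$ for $n \geq 1$) and splitting both divisor sums by degree gives
\[
S_n = \sum_{i,j=0}^{n} \widetilde M(i,j,n), \qquad \widetilde M(i,j,n) := \sum_{g \in \EM_n} d_2^{(i)}(g-1)\, d_2^{(j)}(g).
\]
Unfolding, $\widetilde M(i,j,n)$ counts triples $(a,c,e) \in \EM_i \times \EM_j \times \EM_{n-j}$ with $a \mid ce - 1$ (write $g = ce$ and let $a$ be a monic degree-$i$ divisor of $g-1$). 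For $i + j \leq n$, fixing a coprime pair $(a,c) \in \EM_i \times \EM_j$ and lifting $e \equiv c^{-1} \pmod{a}$ to a monic polynomial of degree $n-j$ yields exactly $q^{n-i-j}$ solutions by a standard residue-class count, so
\[
\widetilde M(i,j,n) = q^{n-i-j} N(i,j) \quad \text{for } i+j \leq n, \qquad N(i,j) := \#\{(a,c)\in\EM_i\times\EM_j : \gcd(a,c)=1\}.
\]

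The main difficulty lies in the regime $i + j > n$, where the same CRT approach reduces to checking whether a specific residue lands in $\EM_n$. This is bypassed by a symmetry: since $g$ and $g-1$ are both monic of degree $n$, the divisor involutions $d \leftrightarrow g/d$ and $d \leftrightarrow (g-1)/d$ give $d_2^{(k)}(g) = d_2^{(n-k)}(g)$ and likewise for $g-1$, hence $\widetilde M(i,j,n) = \widetilde M(n-i, n-j, n)$. The map $(i,j) \mapsto (n-i, n-j)$ swaps the regions $\{i+j < n\}$ and $\{i+j > n\}$ inside $\{0,\ldots, n\}^2$, so
\[
S_n = 2A_n - C_n, \qquad A_n := \sum_{i+j \leq n} \widetilde M(i,j,n), \quad C_n := \sum_{i+j=n} \widetilde M(i,j,n),
\]
both of which are now directly readable off the formula $\widetilde M(i,j,n) = q^{n-i-j}N(i,j)$.

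To close, one computes $A_n$ and $C_n$ in closed form using $N(0,j) = q^j$, $N(i,0) = q^i$, and the standard identity $N(i,j) = q^{i+j-1}(q-1)$ for $i,j \geq 1$ (a short M\"obius computation over $\fq[t]$, since $\sum_{d\in\EM_k}\mu(d)=0$ for $k\geq 2$). This gives $\sum_{i+j=m} N(i,j) = (m+1)q^m - (m-1)q^{m-1}$ for $m \geq 1$, whence $C_n = (n+1)q^n - (n-1)q^{n-1}$ and $A_n = \tfrac12(n+1)(n+2)q^n - \tfrac12 n(n-1)q^{n-1}$. Substituting into $S_n = 2A_n - C_n$ and regrouping yields $S_n = (n+1)^2 q^n - (n-1)^2 q^{n-1}$, which is \eqref{eq: ff Estermann} after dividing by $q^n$.
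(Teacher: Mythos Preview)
Your proof is correct and follows essentially the same approach as the paper: both rewrite the sum as a count of monic solutions to $xy-zu=1$ with prescribed degrees, use the divisor involution $d\leftrightarrow h/d$ to reduce to a regime where a linear congruence (Diophantine) count applies cleanly, and finish via the M\"obius identity $\sum_{d\in\EM_k}\mu(d)=0$ for $k\geq 2$. The only difference is organizational---you track two degree parameters $(i,j)$ and use the combined swap $(i,j)\mapsto(n-i,n-j)$, whereas the paper tracks all four degrees $(\alpha,\beta;\gamma,\delta)$ and swaps each pair independently to reach $\alpha\leq\beta$, $\gamma\leq\delta$---but the underlying ideas are identical.
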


We need two auxiliary lemmas before proving Theorem~\ref{function
field Estermann}.

Let $A,B\in\mathbb{F}_{q}[t]$ be monic polynomials. We want
to count the number of monic polynomials solutions
$(u,v)\in\mathbb{F}_{q}[t]^{2}$ of the linear Diophantine equation

\begin{equation}
\label{eq: Dioph. Equa} Au-Bv=1, \ \ \ \ \
\mathrm{deg}(Au)=n=\mathrm{deg}(Bv)\;.
\end{equation}
As follows from the Euclidean algorithm, a necessary and sufficient
condition for the equation $Au-Bv=1$ to be solvable in
$\mathbb{F}_{q}[t]$ is $\mathrm{gcd}(A,B)=1$.

\begin{lemma}
\label{Dioph. Equation} Given monic polynomials
$A,B\in\mathbb{F}_{q}[t]$, $\mathrm{gcd}(A,B)=1$, and

\begin{equation}
\label{eq: Dioph. Equa1} n\geq\mathrm{deg}(A)+\mathrm{deg}(B)
\end{equation}
then the set of monic solutions $(u,v)$ of \eqref{eq: Dioph. Equa}
forms a nonempty affine subspace of dimension
$n-\mathrm{deg}(A)-\mathrm{deg}(B)$, hence the number of solutions
is exactly $q^{n}/|A||B|$.
\end{lemma}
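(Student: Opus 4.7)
The plan is to reduce to counting a parameter $w$ after writing the general solution of $Au - Bv = 1$ as an affine line over $\FF_q[t]$.

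First I would fix a particular solution. Since $\gcd(A,B)=1$, Bezout in $\FF_q[t]$ produces $(u_0,v_0)$ with $Au_0 - Bv_0 = 1$. Reducing $u_0$ modulo $B$ (Euclidean division), I may assume $\deg u_0 < \deg B$; then from $v_0 = (Au_0-1)/B$ it follows automatically that $\deg v_0 < \deg A$. Next, if $(u,v)$ is any other solution, then $A(u-u_0) = B(v-v_0)$, and coprimality forces $u-u_0 = Bw$, $v - v_0 = Aw$ for a unique $w \in \FF_q[t]$. So the full solution set to $Au-Bv=1$ is
\begin{equation*}
\bigl\{(u_0 + Bw,\; v_0 + Aw) : w \in \FF_q[t]\bigr\}.
\end{equation*}

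The hypothesis $n \geq \deg A + \deg B$ is what makes the monicity/degree conditions clean: since $\deg u_0 < \deg B \leq n - \deg A$, the equation $\deg(Au)=n$ forces $\deg(Bw) = n - \deg A$, hence $\deg w = n - \deg A - \deg B \geq 0$; and because $B$ is monic, $u = u_0 + Bw$ is monic exactly when $w$ is monic. Symmetrically, once $w$ is monic of degree $n - \deg A - \deg B$, the polynomial $v = v_0 + Aw$ is automatically monic of degree $n - \deg B$, because $\deg v_0 < \deg A \leq n - \deg B$.

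Thus the set of monic solutions to \eqref{eq: Dioph. Equa} is in bijection with the set of monic polynomials $w \in \FF_q[t]$ of degree $n - \deg A - \deg B$; this is an affine subspace of the $\FF_q$-vector space of polynomials of degree $\leq n - \deg A - \deg B$ (the hyperplane "leading coefficient $=1$"), hence affine of dimension $n - \deg A - \deg B$ with exactly $q^{n-\deg A-\deg B} = q^n/(|A||B|)$ points. The only step that requires any care is verifying the degree inequalities for $u_0$ and $v_0$, which is the reason for choosing $u_0$ reduced modulo $B$; everything else is a direct consequence of Bezout and the parameterization. There is no real obstacle.
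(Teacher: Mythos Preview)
Your argument is correct and follows essentially the same route as the paper: produce a particular solution via B\'ezout, normalize it so that $\deg u_0<\deg B$, parameterize all solutions by $(u_0+Bw,\,v_0+Aw)$, and then read off from the degree and monicity constraints that the admissible $w$ are exactly the monic polynomials of degree $n-\deg A-\deg B$. The only difference is cosmetic: you also verify the $v$-side explicitly (the paper leaves this implicit), and you phrase the result as an affine hyperplane in the space of polynomials of bounded degree.
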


\begin{proof}
We first ignore the degree condition. By the theory of the linear
Diophantine equation, given a particular
solution $(u_{0},v_{0})\in\mathbb{F}_{q}[t]^{2}$, all other
solutions in $\mathbb{F}_{q}[t]^{2}$ are of the form

\begin{equation}
\label{eq: Dioph. Equa2} (u_{0},v_{0})+k(B,A)
\end{equation}
where $k\in\mathbb{F}_{q}[t]$ runs over all polynomials.

Given $u_{0}$, we may replace it by $u_{1}=u_{0}+kB$ where
$\mathrm{deg}(u_{1})<\mathrm{deg}(B)$ (or is zero), so that we may
assume that the particular solution satisfies

\begin{equation}
\label{eq:DE3} \mathrm{deg}(u_{0})<\mathrm{deg}(B).
\end{equation}
In that case, if $k\neq0$ then

\begin{equation}
\label{eq:DE4} \mathrm{deg}(u_{0}+kB)=\mathrm{deg}(kB)
\end{equation}
and $u_{0}+kB$ is monic if and only if $k$ is monic. Hence if $k\neq0$, then

\begin{align}
\label{eq:DE5}
\mathrm{deg}(u_{0}+kB)=n-\mathrm{deg}(A)&\Leftrightarrow\mathrm{deg}(kB)=n-\mathrm{deg}(A)\nonumber \\
&\Leftrightarrow\mathrm{deg}(k)=n-\mathrm{deg}(A)-\mathrm{deg}(B).
\end{align}

Thus the set of solutions of \eqref{eq: Dioph. Equa} is in
one-to-one correspondence with the space
$\mathcal{M}_{n-\mathrm{deg}(A)-\mathrm{deg}(B)}$ of monic $k$ of
degree $n-\mathrm{deg}(A)-\mathrm{deg}(B)$. In particular the number
of solutions is $q^{n}/|A||B|$.
\end{proof}

Let 
\begin{equation}
\label{eq: S-lemma}
S(\alpha,\beta;\gamma,\delta):=\#\left\{x\in\mathcal{M}_{\alpha},
y\in\mathcal{M}_{\beta},z\in\mathcal{M}_{\gamma},
u\in\mathcal{M}_{\delta}:xy-zu=1\right\}.
\end{equation}
Then we have the following lemma.

\begin{lemma}
\label{S-lemma} For $\alpha+\beta=n=\gamma+\delta$,
\begin{equation}
\label{eq: S-lemma1} S(\alpha,\beta;\gamma,\delta) = q^{n} \times
\begin{cases} 1, & \mbox{if } \mathrm{min}(\alpha,\beta;\gamma,\delta)=0, \\
1-\frac{1}{q}, & \mbox{otherwise}.
\end{cases}
\end{equation}
\end{lemma}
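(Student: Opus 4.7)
The plan is to reduce everything to Lemma~\ref{Dioph. Equation} by viewing $xy-zu=1$ as a linear Diophantine equation in two of the four monic unknowns, handling the boundary case $\min(\alpha,\beta,\gamma,\delta)=0$ separately.

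For the boundary case, all four possibilities are similar; I would write out the case $\alpha=0$ in detail. Then $x=1$ is forced and the equation becomes $y=1+zu$. For every $(z,u)\in \EM_\gamma\times \EM_\delta$ the polynomial $1+zu$ is automatically monic of degree $\gamma+\delta=n=\beta$, hence lies in $\EM_\beta$. This gives $q^\gamma q^\delta=q^n$ solutions. The cases $\beta=0$, $\gamma=0$, or $\delta=0$ are handled identically by isolating the unique trivial variable (e.g.\ if $\gamma=0$ then $z=1$ and $u=xy-1$ is automatically in $\EM_n=\EM_\delta$), each again producing $q^n$ solutions.

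For the main case assume $\alpha,\beta,\gamma,\delta\ge 1$. Since $\alpha+\beta=\gamma+\delta$, either $\beta\ge\gamma$ (equivalently $\delta\ge\alpha$) or the reverse inequality holds; by relabelling I may assume $\beta\ge\gamma$. For each pair $(x,z)\in \EM_\alpha\times \EM_\gamma$ I would view $xy-zu=1$ as a linear equation in the monic unknowns $(y,u)$. Solvability forces $\gcd(x,z)=1$, and conversely when $\gcd(x,z)=1$ the inequality $n\ge\deg x+\deg z=\alpha+\gamma$ (which is exactly $\beta\ge\gamma$) lets me apply Lemma~\ref{Dioph. Equation} to get exactly $q^{n-\alpha-\gamma}$ monic solutions $(y,u)\in \EM_\beta\times \EM_\delta$. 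Hence
\[
S(\alpha,\beta;\gamma,\delta)=q^{n-\alpha-\gamma}\cdot \#\{(x,z)\in \EM_\alpha\times \EM_\gamma : \gcd(x,z)=1\}.
\]

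Finally I would count coprime pairs via Möbius inversion in $\fq[t]$:
\[
\#\{(x,z)\in \EM_\alpha\times \EM_\gamma : \gcd(x,z)=1\}=\sum_{k=0}^{\min(\alpha,\gamma)}\Bigl(\sum_{d\in \EM_k}\mu(d)\Bigr) q^{\alpha+\gamma-2k},
\]
combined with the standard function-field identity $\sum_{d\in \EM_k}\mu(d)=1,-q,0$ for $k=0$, $k=1$, $k\ge 2$ respectively (which follows from $1/\zeta_{\fq[t]}(s)=1-q^{1-s}$). For $\alpha,\gamma\ge 1$ the right-hand side collapses to $q^{\alpha+\gamma}-q^{\alpha+\gamma-1}$, and multiplying by $q^{n-\alpha-\gamma}$ yields $S=q^n(1-1/q)$, as claimed. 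The only mildly delicate step is arranging the degree hypothesis of Lemma~\ref{Dioph. Equation}; the WLOG swap between $\beta\ge\gamma$ and $\beta\le\gamma$ is precisely what keeps us within its range of applicability.
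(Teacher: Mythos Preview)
Your proof is correct and follows essentially the same route as the paper: fix the pair $(x,z)$, apply Lemma~\ref{Dioph. Equation} to count monic solutions $(y,u)$, and then count coprime pairs by M\"obius inversion using the identity $\sum_{d\in\EM_k}\mu(d)=1,-q,0$. The only cosmetic differences are that the paper arranges the degree hypothesis via the symmetric WLOG $\alpha\le\beta$, $\gamma\le\delta$ (forcing $\alpha+\gamma\le n$) rather than your $\beta\ge\gamma$, and that the paper treats the boundary case $\min(\alpha,\beta,\gamma,\delta)=0$ uniformly through the M\"obius sum (which collapses to $1$ when the minimum is $0$) instead of handling it by a separate direct count as you do.
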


\begin{proof}
We have some obvious symmetries from the definition
\begin{equation}
S(\alpha,\beta;\gamma,\delta) = S(\beta,\alpha;\gamma,\delta) =
S(\alpha,\beta;\delta,\gamma)
\end{equation}
and hence to evaluate $S(\alpha,\beta;\gamma,\delta)$ it suffices to
assume
\begin{equation}\label{ordering}
\alpha\leq \beta,\quad \gamma\leq \delta.
\end{equation}
Assuming \eqref{ordering}, we write

\begin{equation}
S(\alpha,\beta;\gamma,\delta) = \sum_{\substack{x\in
\EM_\alpha\\z\in \EM_\gamma\\\gcd(x,z)=1}} \#\{y\in \EM_\beta,u\in
\EM_\delta: xy-zu=1\}
\end{equation}
Note that $\alpha,\gamma\leq n/2$ (since $\alpha+\beta=n$ and
$\alpha\leq \beta$) and hence $\alpha+\gamma\leq\tfrac{1}{2}(\alpha+\beta+\gamma+\delta)=n$. Thus we may use
Lemma~\ref{Dioph. Equation} to deduce that

\begin{equation}
\#\{y\in \EM_\beta,u\in \EM_\delta: xy-zu=1\} = q^{n-\alpha-\gamma}
\end{equation}
and therefore
\begin{equation}
S(\alpha,\beta;\gamma,\delta) =
q^{n-\alpha-\gamma}\sum_{\substack{x\in \EM_\alpha\\z\in
\EM_\gamma\\\gcd(x,z)=1}}1.
\end{equation}
Recall the M\"obius inversion formula, which says that for monic $f$, 
$\sum_{d\mid f}\mu(d)$ equals $1$ if $f=1$, and $0$ otherwise. 
Hence we may write the coprimality condition $\gcd(x,z)=1$ using the
M\"{o}bius function as
\begin{equation}
\sum_{d\mid x, \ d\mid z} \mu(d) = \begin{cases} 1,&
\gcd(x,z)=1,\\0,&{\rm otherwise}. \end{cases}
\end{equation}
and therefore
\begin{equation}
\label{eq11}
\begin{split}
S(\alpha,\beta;\gamma,\delta) &=
q^{n-\alpha-\gamma}\sum_{\substack{x\in \EM_\alpha\\z\in \EM_\gamma
}}\sum_{d\mid x, d\mid z} \mu(d)  \\
\\&=q^{n-\alpha-\gamma}\sum_{\substack{\deg (d)\leq \min(\alpha,\gamma)\\d\;{\rm monic}}}
\mu(d) \#\{x\in \EM_\alpha: d\mid x\}\cdot \#\{z\in \EM_\gamma
:d\mid z\} \\
&=q^{n-\alpha-\gamma}\sum_{\substack{\deg (d)\leq
\min(\alpha,\gamma)\\d\;{\rm monic}}} \mu(d)\frac{q^\alpha}{|d|}
\cdot \frac{q^\gamma}{|d|}\\
&=q^n\sum_{\substack{\deg (d)\leq \min(\alpha,\gamma)\\d\;{\rm
monic}}}\frac{ \mu(d)}{|d|^2}\\
&=q^n\sum_{\substack{\deg (d)\leq \min(\alpha,\beta;\gamma,\delta)\\d\;{\rm
monic}}}\frac{ \mu(d)}{|d|^2},
\end{split}
\end{equation}
where we have used the fact that $\alpha\leq \beta$ and $\gamma\leq \delta$.

We next claim that 
\begin{equation}\label{eq12}
\sum_{\substack{\deg (d)\leq \eta \\d\;{\rm monic}}}\frac{
\mu(d)}{|d|^2} = \begin{cases} 1,& \eta=0,\\1-\frac 1q,& \eta\geq 1,
\end{cases}
\end{equation}
which when we insert into \eqref{eq11} proves the lemma.

To prove \eqref{eq12}, we sum over $d$ of fixed degree
\begin{equation}
\sum_{\substack{\deg (d)\leq \eta \\d\;{\rm monic}}}\frac{
\mu(d)}{|d|^2} =\sum_{0\leq \xi\leq \eta}\frac 1{q^{2\xi}}
\sum_{d\in \EM_\xi} \mu(d)
\end{equation}
and recall that (\cite[Chapter 2 - Exercise 12]{Rosen})
\begin{equation}
\sum_{d\in \EM_\xi} \mu(d) =
\begin{cases}1,&\xi=0\\-q,&\xi=1\\0,&\xi\geq 2
\end{cases}
\end{equation}
 from which \eqref{eq12} follows.
\end{proof}

\begin{proof}[Proof of Theorem \ref{function field Estermann}]
We write
\begin{equation}
\begin{split}
\nu&:= \sum_{f\in \EM_n} \divid_{2}(f)\divid_{2}(f+1) \\
& = \#\{x,y,z,u\in \fq[t]\;{\rm monic}: xy-zu=1,\quad
\deg(xy)=n=\deg(zu)\}.
\end{split}
\end{equation}
We partition this into a sum over variables with fixed degree, that
is
\begin{equation}\label{eq:part}
\nu = \sum_{\substack{\alpha+\beta=n\\\gamma+\delta=n\\
\alpha,\beta,\gamma,\delta\geq 0}} S(\alpha,\beta;\gamma,\delta).
\end{equation}

We now input the results of Lemma~\ref{S-lemma} into \eqref{eq:part}
to deduce that
\begin{equation}
\nu =\sum_{\substack{\alpha+\beta=n\\\gamma+\delta=n\\
\alpha,\beta,\gamma,\delta\geq 0}} q^n \times
\begin{cases} 1,&\min(\alpha,\beta;\gamma,\delta)=0,\\
1-\frac 1q,& {\rm otherwise}.
\end{cases}
\end{equation}
Of the $(n+1)^2$ quadruples of non-negative integers
$(\alpha,\beta;\gamma,\delta)$ so that
$\alpha+\beta=n=\gamma+\delta$,   there are exactly $4n$ tuples
$(\alpha,\beta;\gamma,\delta)$ for which
$\min(\alpha,\beta)=0=\min(\gamma,\delta)$, namely they are

\begin{equation}
(n,0;n,0),\quad (n,0;0,n),\quad (0,n;n,0),\quad (0,n;0,n)
\end{equation}
and the $4(n-1)$ tuples of the form

\begin{equation}
(n,0;i,n-i), (0,n;i,n-i), (i,n-i;n,0),(i,n-i;0,n)
\end{equation}
for $0<i<n$.

Concluding, we have

\begin{equation}
\begin{split}
\nu &= (4+4(n-1))\cdot q^n  + \left[(n+1)^{2}-(4+4(n-1))\right]\cdot q^n\left(1-\frac 1q\right)\\
&=q^n\left( (n+1)^2 - \frac 1q (n-1)^{2} \right)
\end{split}
\end{equation}
proving the theorem.
\end{proof}

It is easy to check that Theorem \ref{main thm} is compatible with
the function field analogue of Estermann's result. Taking
$q\rightarrow\infty$ in \eqref{eq: ff Estermann} we recover the same
results as presented in \eqref{eq:main thm} with $k=2$.

\subsection{Higher divisor functions}\label{Sec:compatibility}

Next, we want to check compatibility of our result in
Theorem~\ref{main thm} with what is conjectured over the integers.
 It is conjectured that
\begin{equation}
D_k(x;h)\sim xP_{2(k-1)}(\log x;h) \ \ \ \ \ \ \ \ \text{as
$x\rightarrow\infty$},
\end{equation}
where  $P_{2(k-1)}(u;h)$ is a polynomial in $u$ of degree $2(k-1)$,
whose coefficients depend on $h$ (and $k$). This conjecture appears 
in the work of Ivi\'{c} \cite{Ivic97}, and Conrey and Gonek \cite{CG}, 
and from their work, with some effort, we can explicitly write the 
conjectural leading coefficient for the desired polynomial. 
The conjecture over $\mathbb{Z}$ states that 

\begin{equation}\label{conj for P}
P_{2(k-1)}(u;h)= \frac{1}{[(k-1)!]^{2}} A_k(h) u^{2k-2}+\dots ,
\end{equation}
 where
\begin{equation}\label{formula for Ar}
A_k(h) = \sum_{m=1}^{\infty}\frac{c_{m}(h)}{m^{2}}C_{-k}^{2}(m)
\end{equation}
with
\begin{equation}
\label{expsum}
C_{-k}(m)=m^{1-k}\sum_{a_{1}=1}^{m}\cdots\sum_{a_{k}=1}^{m}e\left(\frac{ha_{1}\cdots
a_{k}}{m}\right),
\end{equation}
where $e(x)=e^{2\pi ix}$ and $c_{m}(h)$ is the Ramanujan sum
\begin{equation}
c_{m}(h)=\sum_{\substack{a=1 \\ (a,m)=1}}^{m}e^{2\pi i\tfrac{a}{m}h}
=\sum_{d\mid \text{gcd}(m,h)} d\mu\left(\frac{m}{d}\right).
\end{equation}

We now translate the conjecture above
to the function field setting using the correspondence
$x\leftrightarrow q^{n}$, $\log x\leftrightarrow n$ and that sum
over positive integers correspond to sum over 
monic polynomials in $\mathbb{F}_{q}[t]$. Under this
correspondence the function field analogue
of the above polynomial is given in the following 
conjecture

\begin{conj}
\label{conj1} For $q$ fixed, let $0\neq h\in \fq[t] $. Then as $n\to
\infty$,
\begin{equation}
\label{eq:conj1} \sum_{f\in \EM_n} \divid_k(f)\divid_k(f+h)\sim
\frac{1}{[(k-1)!]^{2}}A_{k,q}(h)q^{n}n^{2k-2},
\end{equation}
 where
\begin{equation}
\label{eq:conj2}
A_{k,q}(h)=\sum_{\substack{m\in\mathbb{F}_{q}[t] \\ \text{\em monic}}}\frac{c_{m,q}(h)(\gcd(m,h))^{2(k-1)}}{|m|^{2(k-1)}}g_{k-1}^{2}\left(\frac{m}{\gcd(m,h)}\right),
\end{equation}
where $|m|=q^{\mathrm{deg}(m)}$,
\begin{equation}
g_{k-1}(f)=\#\left\{a_{1},\ldots,a_{k-1}\bmod f:a_{1}\ldots
a_{k-1}\equiv0\bmod f\right\},
\end{equation}
and
\begin{equation}
\label{ramanujan sum props over fq}
c_{m,q}(h)=\sum_{d\mid\gcd(m,h)}|d|\mu\left(\frac{m}{d}\right)
\end{equation}
is the Ramanujan sum over $\mathbb{F}_{q}[t]$.
The sum above is over all monic polynomials $d\in\mathbb{F}_{q}[t]$
and $\mu(f)$ is the M\"{o}bius function for $\mathbb{F}_{q}[t]$ and
$\Phi(m)$ is the $\mathbb{F}_{q}[t]$-analogue for Euler's totient function.
\end{conj}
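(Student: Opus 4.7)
The plan is to transfer the classical analytic approach to the additive divisor problem---going back to Ingham and Estermann for $k=2$, and Motohashi, Conrey--Gonek and Ivi\'c for $k\ge 3$---to $\fq[t]$ with $q$ fixed and $n\to\infty$. The first step is to expand $d_k(f) = \sum_{A\mid f,\,A\text{ monic}} d_{k-1}(A)$ and rewrite the target as
\begin{equation*}
\sum_{f\in\EM_n} d_k(f)\,d_k(f+h) \;=\; \sum_{A,B\text{ monic}} d_{k-1}(A)\,d_{k-1}(B)\,N_n(A,B;h),
\end{equation*}
where $N_n(A,B;h):=\#\{f\in\EM_n:A\mid f,\ B\mid f+h\}$. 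By the Chinese Remainder Theorem, the two congruences $f\equiv 0\pmod A$ and $f\equiv -h\pmod B$ are simultaneously solvable if and only if $D:=\gcd(A,B)$ divides $h$, in which case $f$ lies in a single residue class modulo $\lcm(A,B)=AB/D$. Hence $N_n(A,B;h)=q^n|D|/|AB|$ whenever $\deg(AB/D)\le n$, with a straightforward boundary correction otherwise.

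Substituting back and parametrising $A=DA'$, $B=DB'$ with $\gcd(A',B')=1$, the principal contribution takes the shape
\begin{equation*}
q^n\sum_{\substack{D\mid h\\ D\text{ monic}}}\frac{1}{|D|}\;\sum_{\substack{A',B'\text{ monic}\\ \gcd(A',B')=1\\ \deg(DA'B')\le n}}\frac{d_{k-1}(DA')\,d_{k-1}(DB')}{|A'B'|}.
\end{equation*}
The truncated double sum is then to be analysed via the generating-function identity $\sum_{A\text{ monic}} d_{k-1}(A)\,u^{\deg A}/|A| = (1-u)^{-(k-1)}$, whose pole of order $k-1$ at $u=1$ is responsible for the growth $n^{2(k-1)}$ after coefficient extraction on the double convolution. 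The coprimality condition $\gcd(A',B')=1$ is unfolded by M\"obius inversion in an auxiliary monic polynomial $m$; reindexing the combined sum over $D\mid h$ and this M\"obius parameter should produce the Ramanujan sum $c_{m,q}(h)$ of \eqref{ramanujan sum props over fq} in a natural way. Matching the local Euler factors prime by prime with $g_{k-1}(P^\nu)$---the number of $(k-1)$-tuples mod $P^\nu$ whose product is divisible by $P^\nu$---should then recover the precise singular series $A_{k,q}(h)/[(k-1)!]^2$ asserted in the conjecture; this is the function-field transcription of the bookkeeping carried out by Conrey--Gonek and Ivi\'c.

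The second main task is to carry out the error analysis, and this is where the bulk of the difficulty lies. One must control (i) the contribution from pairs $(A,B)$ with $\deg(AB/D)$ close to $n$ (the boundary of the CRT computation), and (ii) the full tail of the truncated double sum at high degrees. Over $\mathbb{Z}$ these steps require delta-method input together with nontrivial cancellation in Kloosterman sums, as in the work of Deshouillers--Iwaniec and Motohashi. Over $\fq[t]$ the natural analogues are orthogonality of additive characters modulo a polynomial combined with the Weil bound for complete exponential sums, averaged over moduli. Obtaining a power saving along these lines looks accessible, but the sharp polynomial-in-$n$ remainder required for the $\sim$ in the conjecture is expected to be the hardest step and likely demands substantial further input beyond the $q\to\infty$ techniques of the present paper (where $n$ is fixed and the Chebotarev--Galois machinery does the work).
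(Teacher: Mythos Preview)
The statement you were asked to prove is labelled \emph{Conjecture} in the paper, and the paper does \emph{not} prove it. What the paper actually does with this statement is twofold: (i) it establishes the special case $k=2$, $h=1$ exactly, via the elementary count in Theorem~\ref{function field Estermann} (using the linear Diophantine Lemma~\ref{Dioph. Equation} and the explicit evaluation of $S(\alpha,\beta;\gamma,\delta)$ in Lemma~\ref{S-lemma}); and (ii) it verifies that the conjectural leading constant $A_{k,q}(h)$ tends to $1$ as $q\to\infty$, which is precisely the consistency needed with Theorem~\ref{main thm}. No attempt is made to prove the asymptotic for general $k$ with $q$ fixed and $n\to\infty$.

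Your outline is a reasonable heuristic derivation of the constant $A_{k,q}(h)$: the divisor recursion, the CRT count $N_n(A,B;h)$, the M\"obius unfolding of coprimality, and the regrouping into Ramanujan sums are exactly the function-field transcription of the Conrey--Gonek/Ivi\'c bookkeeping that the paper is translating. You are also right that the pole of order $k-1$ of $(1-u)^{-(k-1)}$ produces the $n^{2(k-1)}$ growth. However, this derivation yields at best a formal main term; you yourself flag that the error analysis is the real obstruction, and nothing in your proposal bridges that gap. For $k\ge 3$ over $\Z$ the conjecture is open, and your appeal to Weil-type bounds for Kloosterman sums over $\fq[t]$ is not known to suffice here either, so your proposal should be read as a motivation for the shape of the conjecture rather than a proof. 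For the one case the paper does settle ($k=2$, $h=1$), note that the paper's argument is considerably more elementary than yours: no Kloosterman input is needed, only an exact count of monic solutions to $xy-zu=1$ with prescribed degrees.
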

\begin{remark}
Note that 
\begin{equation}
C_{q,-k}^{2}(m)=\frac{\text{gcd}(m,h)^{2k-1}}{|m|^{k-1}}g_{k-1}^{2}\left(\frac{m}{\text{gcd}(m,h)}\right)
\end{equation}
correspond to $C_{-k}^{2}(m)$ as given in \eqref{expsum}.
\end{remark}
\begin{remark}
Note that we establish this conjecture for $k=2$ and $h=1$ in
Theorem~\ref{function field Estermann}.
\end{remark}

We now check that our Theorem~\ref{main thm}  
is consistent  with the
conjecture~\eqref{conj for P} and \ref{eq:conj2} for the leading term of the
polynomial $P_{2(k-1)}(u;h)$.

The polynomial given by Theorem \ref{main thm} is
\begin{equation}
\binom{n+k-1}{k-1}^2 = \frac 1{[(k-1)!]^2} n^{2(k-1)}+\dots.
\end{equation}

We wish to show that as $q\to \infty$, $A_{k,q}(h)/[(k-1)!]^2$
matches the leading coefficient of $\binom{n+k-1}{k-1}^2$, that is
\begin{equation}\label{matching Ar}
\lim_{q\to \infty} A_{k,q}(h) =1.
\end{equation}

Indeed, from \eqref{ramanujan sum props over fq} we note that
$|c_{m,q}(h)| = O_h(1)$, and it is easy to see that
\begin{equation}
g_{k-1}(n)\leq n^{k-1}\divid(n)^{k-1} \ll |n|^{k-2+\epsilon}, \quad
\forall \epsilon>0.
\end{equation}
Thus we find
\begin{equation}
A_{k,q}(h) =1+   O\Big(\sum_{\substack{m\in \mathcal M\\ \deg(m)
>0}} \frac 1{|m|^{2-\epsilon}}\Big).
\end{equation}
The series in the $O$-term is a geometric series:
\begin{equation}
\sum_{\substack{m\in \mathcal M\\ \deg(m)
>0}} \frac 1{|m|^{2-\epsilon}} = \sum_{n=1}^\infty \frac
1{q^{n(2-\epsilon)}} \#\mathcal M_n =\sum_{n=1}^\infty \frac
1{q^{n(1-\epsilon)}}= \frac{1/q^{1-\epsilon}}{1- 1/q^{1-\epsilon}}
\end{equation}
and hence tends to $0$ as $q\to \infty$, giving \eqref{matching Ar}.
\vspace{1.0cm}

\noindent\textbf{Acknowledgments.} We thank an anonymous referee for 
detailed comments and suggestions.



\appendix
\section{An explicit Chebotarev theorem}\label{apendix}
We prove an explicit Chebotarev theorem for function fields over
finite fields. This theorem is known to experts, 
cf. \cite[Theorem 4.1]{Cha}, \cite[Proposition 6.4.8]{FJ} or
\cite[Theorem 9.7.10]{KS}, however there it is not given explicitly 
with the uniformity that we need to use. Therefore we provide a complete proof.
\subsection{Frobenius elements}
Let $\FF_q$ be a finite field with $q$ elements and algebraic
closure $\FF$. We denote by $\Fr_q$ the Frobenius automorphism
$x\mapsto x^q$.

Let $R$ be an integrally closed finitely generated $\FF_q$-algebra
with fraction field $K$,  let $\F\in R[T]$ be a monic separable
polynomial of degree $\deg \F=m$ such that
\begin{equation}\label{eq:disc_inv}
\disc\F \in R^*
\end{equation}
is invertible. Let $\bfY=(Y_1, \ldots, Y_m)$ be the roots of $\F$,
and  put
\[
S=R[\bfY],\qquad L=K(\bfY),\qquad \mbox{and} \qquad G=\Gal(L/K).
\]
We identify $G$ with a subgroup of $S_m$ via the action on $Y_1,
\ldots, Y_m$:
\begin{equation}
\label{eq:embedding_into_Sm} g(Y_{i}) = Y_{g(i)}, \qquad g\in G\leq
S_m.
\end{equation}
By \eqref{eq:disc_inv} and Cramer's rule, $S$ is the integral
closure of $R$ in $L$ and $S/R$ is unramified. In particular, the
relative algebraic closure $\FF_{q^{\mu}}$ of $\FF_q$ in $L$ is
contained in $S$. For each $\nu \geq 0$ we let
\begin{equation}\label{eqdef:G_nu}
G_\nu = \{ g\in G : g(x) = x^{q^\nu}, \ \forall x\in \FF_{q^\mu}\},
\end{equation}
the preimage of $\Fr_q^\nu$ in $G$ under the restriction map. Since
$\Gal(\FF_{q^\nu}/\FF_q)$ is commutative, $G_\nu$ is stable under
conjugation.

For every $\Phi\in \Hom_{\FF_q}(S,\FF)$ with $\Phi(R)=\FF_{q^\nu}$
there exists a unique element in $G$, which we call the
\emph{Frobenius element} and denote by
\begin{equation}\label{eq:not_Frob}
\left[\frac{S/R}{\Phi}\right] \in G,
\end{equation}
such that
\begin{equation}\label{eq:Frob}
\Phi\left(\left[\frac{S/R}{\Phi}\right] x\right) =
\Phi(x)^{q^{\nu}}, \qquad \forall x\in S.
\end{equation}
Since $S$ is generated by $\bfY$ over $R$, it suffices to consider
$x\in\{Y_1, \ldots, Y_k\}$ in \eqref{eq:Frob}. If we further assume
that $\Phi\in \Hom_{\FF_{q^\mu}}(S,\FF)$, then \eqref{eq:Frob} gives
that $\left[\frac{S/R}{\Phi}\right]x=x^{q^{\nu}}$ for all $x\in
\FF_{q^\mu}$, hence

\begin{equation}\label{eq:inGnu}
\Phi(R)=\FF_{q^\nu} \Longrightarrow \left[\frac{S/R}{\Phi}\right]\in
G_\nu.
\end{equation}

\begin{lemma}\label{lem:matrix}
For every $g\in S_m$ and $\nu\geq 1$ there exists
$V_{g,\nu}=(v_{ij})\in \GL_m(\FF)$ such that such that $\Fr_{q^\nu}$
acts on the rows of $V_{g,\nu}$ as $g$ acts on $\bfY$:
\begin{equation}\label{eq:actonmatrix}
v_{ij}^{q^\nu} = v_{g(i)j}.
\end{equation}
\end{lemma}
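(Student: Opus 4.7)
The plan is to reduce to the case where $g$ is a single cycle, and then invoke the Moore (Dickson) matrix construction built from a basis of a suitable finite extension of $\FF_{q^\nu}$.

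First, I would decompose $g\in S_m$ as a product of disjoint cycles, supported on subsets $I_1,\dots,I_r \subseteq\{1,\dots,m\}$ with $|I_\ell|=k_\ell$ and $\sum k_\ell=m$. I will look for $V$ in block diagonal form with respect to the partition of both rows and columns by the $I_\ell$'s. This ansatz is consistent with the required identity $v_{ij}^{q^\nu}=v_{g(i),j}$: whenever $i$ and $j$ lie in different blocks, $g(i)$ lies in the same block as $i$, so setting $v_{ij}=0$ automatically forces $v_{g(i),j}=0$ as well. Since a block diagonal matrix is invertible iff each diagonal block is, it suffices to construct a single $k\times k$ block for a $k$-cycle.

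After relabeling, take $g=(1\,2\,\cdots\,k)\in S_k$, so $g(i)\equiv i+1 \pmod k$. Choose any $\FF_{q^\nu}$-basis $\beta_1,\dots,\beta_k$ of $\FF_{q^{\nu k}}$ (e.g.\ the consecutive powers of a primitive element) and set
\begin{equation*}
v_{ij} \;:=\; \beta_j^{\,q^{(i-1)\nu}}, \qquad 1\le i,j\le k.
\end{equation*}
Then $v_{ij}^{q^\nu}=\beta_j^{q^{i\nu}}=v_{i+1,j}$, and the cyclic wrap-around at $i=k$ is automatic because $\beta_j\in\FF_{q^{\nu k}}$ is fixed by $\Fr_q^{\nu k}$. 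The resulting $k\times k$ matrix is a Moore (Dickson) matrix over $\FF_{q^\nu}$, whose determinant vanishes iff $\beta_1,\dots,\beta_k$ are $\FF_{q^\nu}$-linearly dependent; since we picked a basis, the block is invertible.

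There is no serious obstacle here. In particular, the Normal Basis Theorem is \emph{not} required: we only need the (classical) fact that the Moore determinant $\det\bigl(\beta_j^{q^{(i-1)\nu}}\bigr)$ of $k$ elements of $\FF_{q^{\nu k}}$ detects $\FF_{q^\nu}$-linear independence, together with the trivial observation that $\FF_{q^{\nu k}}$ admits an $\FF_{q^\nu}$-basis. The hypothesis $\nu\ge1$ is also used implicitly, since for $\nu=0$ the condition forces rows in the same cycle to coincide, contradicting invertibility whenever $g\neq e$.
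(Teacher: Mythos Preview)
Your proof is correct and follows essentially the same route as the paper: decompose $g$ into disjoint cycles, build $V$ block-diagonally, and for a $k$-cycle produce a $k\times k$ block whose rows are the $\Fr_{q^\nu}$-orbit of a suitable first row. The only cosmetic difference is that the paper first replaces $q$ by $q^\nu$ to reduce to $\nu=1$ and then takes the specific power basis $1,\zeta,\zeta^2,\dots,\zeta^{k-1}$ of $\FF_{q^k}$ (so the block is literally a Vandermonde matrix in the conjugates of $\zeta$, and invertibility follows from the Vandermonde determinant), whereas you keep $\nu$ general and allow an arbitrary $\FF_{q^\nu}$-basis of $\FF_{q^{\nu k}}$, invoking the Moore determinant criterion instead; the paper's block is precisely your construction specialized to the power basis.
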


\begin{proof}
By replacing $q$ by $q^\nu$, we may assume without loss of
generality that $\nu=1$. By relabelling we may assume without loss
of generality that
\begin{equation}\label{eq:permutation}
g =
 (s_1\ \cdots\ e_1)(s_2\ \cdots\ e_2) \cdots (s_{k}\ \cdots \ e_{k}),
\end{equation}
where $s_1=1$, $s_{i+1}=e_i+1$, and $e_k=m$.

Let $V$ be the block diagonal matrix
\[
V=\begin{pmatrix} V_{1}  \\  & V_{2} &  \\ &  & \ddots    \\ & & &
V_{k}\end{pmatrix} ,
\]
where
\[
V_{i}=
\begin{pmatrix}
1& \zeta_i& \cdots &\zeta_i^{\lambda_i-1}\\
1& \zeta_i^q& \cdots &\zeta_i^{q(\lambda_i-1)}\\
\vdots&\vdots && \vdots\\
1& \zeta_i^{q^{\lambda_i-1}}& \cdots
&\zeta_i^{q^{\lambda_i-1}(\lambda_i-1)}
\end{pmatrix},
\]
is the vandermonde matrix corresponding to an element  $\zeta_i\in
\FF$  of degree $\lambda_i=e_i-s_i$ over $\FF_q$. So $\det
V_i=\prod_{1\leq j'<j\leq \lambda_i}
(\zeta_i^{q^{j'-1}}-\zeta_i^{q^{j-1}})\neq 0$, hence $V$ is
invertible, and by definition $\Fr_q$ acts on the rows of $V$ as the
permutation $g$.
\end{proof}

\begin{lemma}
Let $\Phi\colon S \to \FF$ with $\Phi(R)=\FF_{q^\nu}$ and let $g\in
G_{\nu}$. Then
\begin{equation} \label{eq:frob-point}
\left[ \frac{S/R}{\Phi}\right] = g \Longleftrightarrow
V^{-1}\left(\begin{smallmatrix}\Phi(Y_1)\\ \vdots \\ \Phi(Y_m)
\end{smallmatrix}\right) \in \FF_{q^\nu}^m,
\end{equation}
where $V=V_{g,\nu}$ is the matrix from Lemma~\ref{lem:matrix}.
\end{lemma}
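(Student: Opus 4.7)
The plan is to translate both sides of the claimed equivalence into explicit equations on the vector $\mathbf{y} := (\Phi(Y_1), \ldots, \Phi(Y_m))^T \in \FF^m$ and then compare them. First, abbreviate $h := \left[\frac{S/R}{\Phi}\right] \in G$, noting $h \in G_\nu$ by \eqref{eq:inGnu}. Unpacking the defining relation \eqref{eq:Frob} for the generators $Y_1,\dots,Y_m$ of $S$ over $R$, and using the embedding \eqref{eq:embedding_into_Sm}, the Frobenius property is equivalent to the system of scalar equations
\[
y_{h(i)} \;=\; y_i^{q^\nu}, \qquad i=1,\dots,m,
\]
where $y_i := \Phi(Y_i)$. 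So the task reduces to showing that, for $g \in G_\nu$, the equality $h=g$ is equivalent to $\mathbf{c} := V_{g,\nu}^{-1}\mathbf{y} \in \FF_{q^\nu}^m$.

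Next I would expand $\mathbf{y} = V_{g,\nu}\,\mathbf{c}$ coordinatewise as $y_i = \sum_j v_{ij} c_j$ and raise to the $q^\nu$-th power, applying \eqref{eq:actonmatrix} to turn $v_{ij}^{q^\nu}$ into $v_{g(i)j}$. This yields the two parallel identities
\begin{equation*}
y_i^{q^\nu} \;=\; \sum_j v_{g(i)j}\, c_j^{q^\nu}, \qquad y_{g(i)} \;=\; \sum_j v_{g(i)j}\, c_j.
\end{equation*}
For the forward implication, assume $h = g$; subtracting the two identities gives $\sum_j v_{g(i)j}(c_j - c_j^{q^\nu}) = 0$ for all $i$. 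Since permuting rows preserves invertibility, the matrix $(v_{g(i)j})_{i,j}$ is invertible, forcing $c_j = c_j^{q^\nu}$ for every $j$, i.e., $\mathbf{c}\in\FF_{q^\nu}^m$.

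For the converse, assume $\mathbf{c}\in\FF_{q^\nu}^m$, so $c_j^{q^\nu}=c_j$ and the identities above collapse to $y_i^{q^\nu}=y_{g(i)}$. Combined with $y_i^{q^\nu}=y_{h(i)}$ (the Frobenius property of $h$), this gives $\Phi(Y_{g(i)}) = \Phi(Y_{h(i)})$ for each $i$. At this point the hypothesis $\disc\F\in R^*$ enters: its image $\Phi(\disc\F)\in\FF^*$ is the discriminant of the specialized polynomial, which is therefore separable, so the values $\Phi(Y_1),\dots,\Phi(Y_m)$ are pairwise distinct. Hence $g(i)=h(i)$ for all $i$ and $g=h$, completing the equivalence.

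The proof is essentially linear algebra once the setup is correct; the main thing to be careful about is precisely this last step, where invertibility of the discriminant is what upgrades an equality of $\Phi$-values to an equality of indices, and thus of the permutations $g$ and $h$. The hypothesis $g\in G_\nu$ itself plays no direct role in the computation beyond guaranteeing that the comparison takes place inside $G_\nu$, which contains $h$ automatically by \eqref{eq:inGnu}.
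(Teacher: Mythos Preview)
Your proof is correct and follows essentially the same linear-algebra route as the paper: both arguments write $\Phi(Y_i)=\sum_j v_{ij}c_j$, apply $\Fr_{q^\nu}$ using \eqref{eq:actonmatrix}, and exploit invertibility of $V$ to pass between the condition $c_j^{q^\nu}=c_j$ and the Frobenius relation $\Phi(Y_i)^{q^\nu}=\Phi(Y_{g(i)})$. The only cosmetic difference is that in the $\Leftarrow$ direction the paper invokes the uniqueness of the Frobenius element directly (having noted after \eqref{eq:Frob} that it suffices to check on the generators $Y_i$), whereas you make the underlying injectivity of $\Phi$ on the roots explicit via $\disc\F\in R^*$; these are the same idea.
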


\begin{proof}
Let $z_1, \ldots, z_m\in \FF$ be the unique solution of the linear
system
\begin{equation}\label{eq:syseqn}
\Phi(Y_i) = \sum_{j=1}^m v_{ij} z_j, \qquad i=1,\ldots m,
\end{equation}
i.e.\ $\left(\begin{smallmatrix}z_1 \\ \vdots \\ z_m
\end{smallmatrix}\right)=V^{-1}\left(\begin{smallmatrix}\Phi(Y_1)\\ \vdots \\ \Phi(Y_m)
\end{smallmatrix}\right)$.
If $z_i\in \FF_{q^\nu}$, i.e.\ $z_i^{q^\nu}=z_i$, we get by applying
$\Fr_{q^\nu}$ on \eqref{eq:syseqn} that
\[
\Phi(Y_i)^{q^\nu} = \sum_{j=1}^m v_{ij}^{q^{\nu}} z_i = \sum_{j=1}^m
v_{g(i) j}z_i = \Phi(Y_{g(i)}).
\]
Hence $\left[ \frac{S/R}{\Phi}\right] = g$ by \eqref{eq:Frob}.

Conversely, if $\left[ \frac{S/R}{\Phi}\right] = g$, then
$\Phi(Y_{i})^{q^\nu} = \Phi(Y_{g(i)})$ by
\eqref{eq:embedding_into_Sm} and \eqref{eq:Frob}. We thus get that
$\Fr_{q^\nu}$ permutes the equations in \eqref{eq:syseqn}, hence
$\Fr_{q^\nu}$ fixes the unique solution of \eqref{eq:syseqn}. That
is to say, $z_i^{q^\nu}=z_i$, as needed.
\end{proof}

Next we describe the dependence of the Frobenius element when
varying the homomorphisms. For $\phi\in \Hom_{\FF_q}(R,\FF)$ we
define
\begin{equation}\label{frob cc}
\left(\frac{S/R}{\phi}\right) = \left\{\left[\frac{S/R}{\Phi}\right]
: \Phi\in \Hom_{\FF_{q^\mu}}(S,\FF) \mbox{ prolongs $\phi$}\right\}.
\end{equation}
Unlike the case when working with ideals, this set is not a
conjugacy class in $G$, since we fix the action on $\FF_{q^\mu}$.
However as we will prove below, the group $G_0$ acts regularly on
$\left(\frac{S/R}{\phi}\right)$ by conjugation. In particular if
$G_0=G$, or equivalently if $L\cap \FF=\FF_q$ (with $\FF$ denoting
an algebraic closure of $\FF_q$) then
$\left(\frac{S/R}{\phi}\right)$ is a conjugacy class.

To state the result formally, we recall that a group $\Gamma$ acts
\emph{regularly} on a set $\Omega$ if the action is free and
transitive, i.e.\ for every $\omega_1,\omega_2\in \Omega$ there
exists a unique $\gamma\in \Gamma$ with $\gamma \omega_1 =
\omega_2$.

\begin{lemma}\label{lem:fixed-phi}
Let $\phi\in \Hom_{\FF_q}(R,\FF)$ and let $H$ be the subset of
$\Hom_{\FF_{q^\mu}}(S,\FF)$ consisting of all homomorphisms
prolonging $\phi$. Assume that $\phi(R)=\FF_{q^\nu}$.
\begin{enumerate}
\item The group $G_0$ defined in \eqref{eqdef:G_nu} acts regularly on $H$ by $g\colon \Phi\mapsto \Phi\circ g$.
\item for every $g\in G_0$ and $\Phi\in H$ we have
\[
\left[ \frac{S/R}{\Phi\circ g}\right] = g^{-1}\left[
\frac{S/R}{\Phi}\right] g.
\]
\item
Let $\Phi\in H$, let $g=\left[ \frac{S/R}{\Phi}\right]$, $H_g = \{
\Psi\in H : \left[ \frac{S/R}{\Psi}\right]=g\}$, and $C_{G_0}(g)$
the centralizer of $g$ in $G_0$. Then $C_{G_0}(g)$ acts regularly on
$H_g$.
\item $\# H_g = \#G_0/\#C= \#G/\mu\cdot \#C$, where $C$ is the conjugacy class of $g$ in $G_0$.
\end{enumerate}
\end{lemma}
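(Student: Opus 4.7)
The proof rests on two structural observations: that $X_\phi := \{\Phi : S \to \FF : \Phi|_R = \phi\}$ is a $G$-torsor under the action $g \colon \Phi \mapsto \Phi \circ g$, and that restriction to the constant field $\FF_{q^\mu}$ identifies $H$ with the fiber over the identity of a $G$-equivariant map $X_\phi \to \Gal(\FF_{q^\mu}/\FF_q)$ whose point-stabilizer is precisely $G_0$.

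For part (1), I would first verify that $X_\phi$ is a $G$-torsor. Freeness is immediate from $\disc \F \in R^*$: the images $\Phi(Y_1), \ldots, \Phi(Y_m)$ are distinct roots of $\F$ in $\FF$ (since $\Phi(\disc \F) \neq 0$), so $\Phi \circ g = \Phi$ forces $g(i) = i$ for every $i$, whence $g = 1$. Transitivity, equivalently $\#X_\phi = \#G$, follows from unramifiedness of $S/R$ by the standard going-up count at the prime $\ker \phi$. I would then introduce the restriction map $\rho \colon X_\phi \to \Gal(\FF_{q^\mu}/\FF_q)$ given by $\rho(\Phi) = \Phi|_{\FF_{q^\mu}}$. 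It is equivariant with respect to the natural map $G \twoheadrightarrow \Gal(\FF_{q^\mu}/\FF_q)$, which is surjective because $\FF_{q^\mu}$ is the relative algebraic closure of $\FF_q$ in $L$, and whose kernel is $G_0$ by \eqref{eqdef:G_nu}. Consequently $H = \rho^{-1}(\mathrm{id})$ is a nonempty $G_0$-torsor.

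For part (2), I would verify the conjugation formula by direct substitution into \eqref{eq:Frob}. Writing $g' = \left[\frac{S/R}{\Phi}\right]$ and $h = \left[\frac{S/R}{\Phi \circ g}\right]$, the identity $\Phi(g'(y)) = \Phi(y)^{q^\nu}$ applied at $y = g(x)$ gives $\Phi(g' g(x)) = \Phi(g(x))^{q^\nu}$, while the defining property of $h$ rewrites the right-hand side as $\Phi(g h(x))$. Thus $\Phi \circ (gh) = \Phi \circ (g' g)$ as elements of $X_\phi$, and the freeness from (1) forces $gh = g' g$, that is $h = g^{-1} g' g$.

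Parts (3) and (4) are then formal. Fixing $\Phi \in H_g$ and $g_1 \in G_0$, part (2) shows $\Phi \circ g_1 \in H_g$ iff $g_1^{-1} g g_1 = g$, iff $g_1 \in C_{G_0}(g)$; combined with the $G_0$-torsor structure from (1), this exhibits $H_g$ as a $C_{G_0}(g)$-torsor, proving (3). The orbit-stabilizer theorem for the $G_0$-conjugation action yields $\#C_{G_0}(g) = \#G_0/\#C$, and $\#G_0 = \#G/\mu$ since $[G : G_0] = \mu$, completing (4). The only point requiring real care is the nonemptiness of $H$, which reduces to surjectivity of the restriction $G \to \Gal(\FF_{q^\mu}/\FF_q)$; all genuine content lives in part (1).
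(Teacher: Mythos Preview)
Your argument is correct. Parts (2)--(4) match the paper's treatment essentially line for line. For part~(1) you take a cleaner, more structural route: you first show that the full set $X_\phi$ of $\FF_q$-algebra extensions of $\phi$ to $S$ is a $G$-torsor (freeness from $\disc\F\in R^*$; transitivity from the \'etale fiber count $\#X_\phi=[L:K]=\#G$), then deduce that $H=\rho^{-1}(\mathrm{id})$ is a $G_0$-torsor via the equivariant restriction $\rho\colon X_\phi\to\Gal(\FF_{q^\mu}/\FF_q)$ and the surjectivity of $G\twoheadrightarrow\Gal(\FF_{q^\mu}/\FF_q)$. The paper works directly inside $H$ instead: given $\Phi,\Psi\in H$ it first uses transitivity of $G_0=\Gal(L/K\FF_{q^\mu})$ on primes of $S$ above $\ker\phi\cdot R\FF_{q^\mu}$ to reduce to $\ker\Phi=\ker\Psi$, and then realizes the residual automorphism $\alpha$ (a power of $\Fr_q$ fixing both $\FF_{q^\mu}$ and $\FF_{q^\nu}$) as composition with a suitable power of the Frobenius element $\left[\frac{S/R}{\Psi}\right]\in G_\nu$, checking afterwards that this power lands in $G_0$. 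Your torsor argument is shorter and isolates nonemptiness of $H$ as the only genuine issue; the paper's hands-on proof avoids appealing to the $G$-torsor structure of $X_\phi$ but has to invoke the Frobenius element already in the transitivity step, making part~(1) less self-contained.
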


\begin{proof}
We consider $G_0\leq G$ as subgroups of $S_m$ via the action on
$Y_1,\ldots, Y_m$. Let $g\in G_0$ and $\Phi\in H$. Then $g(x) = x$
and $\Phi(x)=x$, thus $\Phi\circ g(x)=x$, for all $x\in
\FF_{q^\mu}$. Thus $\Phi\circ g\in H$. If $\Phi\circ g=\Phi$, then
$\Phi(Y_{g(i)})=\Phi(Y_{i})$ for all $i$. Since $\disc \F\in R^*$ it
follows that $\Phi(\disc\F)\neq 0$, thus $\Phi$ maps $\{Y_1, \ldots,
Y_m\}$ injectively onto $\{\Phi(Y_1), \ldots, \Phi(Y_m)\}$. We thus
get that $Y_{g(i)}=Y_{i}$, hence $g$ is trivial. This proves that
the action is free.

Next we prove that the action is transitive. Let $\Phi,\Psi\in H$.
Then $\ker\Phi$ and $\ker \Psi$ are prime ideals of $S$ that lies
over the prime ideal $\ker \phi$ of $R$; hence over the prime
$\ker\phi \FF_{q^\mu}$ of $R\FF_{q^\mu}$. By \cite[VII,2.1]{Lang},
there exists $g_1\in \Gal(L/K\FF_{q^\mu})=G_0$ such that
$\ker(\Phi\circ g_1^{-1})= g_1\ker \Phi = \ker\Psi$. Replace $\Phi$
by $\Phi\circ g_1^{-1}$ to assume without loss of generality that
$\ker \Phi=\ker\Psi$. Hence $\Phi = \alpha\circ \Psi$, where
$\alpha$ is an automorphism of the image $\Phi(S)=\Psi(S)$ that
fixes both $\FF_{q^\mu}$ and $\phi(R)=\FF_{q^\nu}$. That is to say,
$\alpha=\Fr_{q}^\rho$, where $\rho$ is a common multiple of $\nu$
and $\mu$. By \eqref{eq:Frob}
\[
\Phi(x) = \Psi(x)^{q^{\rho}}= \Psi\left(\left[
\frac{S/R}{\Psi}\right]x\right)^{q^{\rho-\nu}} = \cdots =
\Psi\left(\left[ \frac{S/R}{\Psi}\right]^{\rho/\nu} x\right),
\]
so $\Phi = \Psi\circ g$, where $g=\left[
\frac{S/R}{\Psi}\right]^{\rho/\nu}$. Since, for $x\in \FF_{q^\mu}$
we have $g(x) = x^{q^\rho}$ and $\mu\mid \rho$, we have $g(x)=x$, so
$g\in G_0$. This finishes the proof of (1).

To see (2) note that
\[
\begin{split}
\Phi\left(g\left[ \frac{S/R}{\Phi\circ g}\right]x\right)&=\Phi\circ g\left(\left[ \frac{S/R}{\Phi\circ g}\right]x\right) \\
&=\Phi\circ g (x)^{q^\nu} = \Phi(gx)^{q^\nu} \\
&= \Phi\left(\left[ \frac{S/R}{\Phi}\right] g x\right), \qquad
\mbox{for all $x\in S$,}
\end{split}
\]
so $g\left[ \frac{S/R}{\Phi\circ g}\right]=\left[
\frac{S/R}{\Phi}\right] g$ (since $\Phi$ is unramified), as claimed.

The rest of the proof is immediate as (3) follows immediately from
(1) and (2) and (4) follows from (3).
\end{proof}

By \eqref{eq:inGnu} and Lemma~\ref{lem:fixed-phi} it follows that if
$\Phi(R)=\FF_{q^\nu}$, then $\left(\frac{S/R}{\phi}\right)\subseteq
G_{\nu}$ is an orbit of the action of conjugation from $G_0$.

Let $C\subseteq G$ be such an orbit, i.e.\ $C=C_g=\{hgh^{-1}:h\in
G_0\}$, $g\in G_\nu$. Then $C\subseteq G_\nu$, since the latter is
stable under conjugation (see after \eqref{eqdef:G_nu}). The
explicit Chebotarev theorem gives the asymptotic probability that
$\left(\frac{S/R}{\phi}\right)=C$:
\[
P_{\nu,C} = \frac{\# \left\{ \phi \in \Hom_{\FF_q}(R,\FF) :
\phi(R)=\FF_{q^\nu} \mbox{ and }
\left(\frac{S/R}{\phi}\right)=C\right\}}{\# \{ \phi \in
\Hom_{\FF_q}(R,\FF) : \phi(R)=\FF_{q^\nu}\}}.
\]

\newcommand{\comp}{{\rm cmp}}

\begin{theorem}\label{thm:Cheb-gen}
Let $\nu\geq 1$, let $C\subseteq G_\nu$ be an orbit of the action of
conjugation from $G_0$. Then
\[
P_{\nu,C} = \frac{\#C}{\#G_\nu} + O_{\deg\F,\comp(R)}(q^{-1/2}),
\]
as $q\to \infty$.
\end{theorem}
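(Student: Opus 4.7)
The plan is to count $\mathcal H_g := \{\Phi \in \Hom_{\FF_{q^\mu}}(S, \FF) : [S/R/\Phi] = g\}$ for a single representative $g \in C$, recast the count as one of fixed points of a twisted Frobenius on $Y := \mathrm{Spec}(S)$, and then apply the Lang--Weil bound. I will proceed in three steps.

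First, I would reduce to estimating $\#\mathcal H_g$. Fix $g \in C$. Lemma~\ref{lem:fixed-phi}(4) gives the identity
\[
\#\{\phi : \phi(R) = \FF_{q^\nu},\,(S/R/\phi) = C\} = \tfrac{\#C}{\#G_0}\,\#\mathcal H_g.
\]
Summing over $G_0$-orbits in $G_\nu$ (and using Lemma~\ref{lem:fixed-phi}(2) to see that $\#\mathcal H_g$ depends only on the orbit of $g$) yields the corresponding count for the denominator of $P_{\nu, C}$. It therefore suffices to show $\#\mathcal H_g = q^{\nu n} + O(q^{\nu n - \nu/2})$ uniformly in $g \in G_\nu$, with implied constant depending only on $\deg(\F)$ and $\comp(R)$. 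The distinction between $\phi(R) = \FF_{q^\nu}$ and $\phi(R) \subseteq \FF_{q^\nu}$ is handled by Möbius inversion over divisors of $\nu$ and absorbed into the error.

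Next, I would reformulate $\mathcal H_g$ geometrically. A homomorphism $\Phi \in \Hom_{\FF_{q^\mu}}(S,\FF)$ is the same as an $\FF$-point $(\bfa, y_1, \ldots, y_m)$ of $Y = \mathrm{Spec}(S) \subset \mathrm{Spec}(R) \times \mathbb{A}^m$, and by \eqref{eq:Frob} and \eqref{eq:embedding_into_Sm}, the condition $[S/R/\Phi] = g$ is equivalent to $y_{g(i)} = y_i^{q^\nu}$ for all $i$, i.e., to the point being fixed by the composition of the $q^\nu$-Frobenius with the permutation action of $g$. Using the matrix criterion \eqref{eq:frob-point} and Lemma~\ref{lem:matrix}, the linear change of coordinates $\bfy = V_{g,\nu}\bfz$ diagonalizes this twist and identifies $\mathcal H_g$ with the $\FF_{q^\nu}$-rational points of an affine variety $Y_g$ defined over $\FF_{q^\nu}$ and geometrically isomorphic to $Y$. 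Since $Y$ is geometrically irreducible over $\FF_{q^\mu}$ of dimension $n$ (being finite étale over the integral $\mathrm{Spec}(R)$, with $\FF_{q^\mu}$ as its full field of constants), so is $Y_g$ over $\FF_{q^\nu}$.

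The third step is Lang--Weil. The sum of the Betti numbers of $Y_g$ is bounded purely in terms of $\deg(\F)$ and $\comp(R)$ by standard estimates on varieties cut out by polynomials of bounded degree, so the Lang--Weil (equivalently Grothendieck--Lefschetz plus Deligne's Weil~II) bound gives
\[
\#\mathcal H_g = \#Y_g(\FF_{q^\nu}) = q^{\nu n} + O_{\deg(\F),\,\comp(R)}\!\bigl(q^{\nu n - \nu/2}\bigr).
\]
Combining with Step~1 yields $P_{\nu, C} = \#C/\#G_\nu + O(q^{-1/2})$.

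The main obstacle will be the descent claim in the second step: verifying that the equations of $Y$, after the substitution $\bfy = V_{g,\nu}\bfz$, actually have coefficients in $\FF_{q^\nu}$ (and not merely in some larger extension), and that the resulting $Y_g$ inherits geometric irreducibility and dimension $n$. The former is precisely guaranteed by the defining property of $V_{g,\nu}$ in Lemma~\ref{lem:matrix}, namely that $\Fr_{q^\nu}$ permutes its rows according to $g$; the latter then follows from $Y$ by Galois descent, once the first point is secured.
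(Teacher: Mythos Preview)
Your proposal is correct and follows essentially the same route as the paper: both fix $g\in C$, use Lemma~\ref{lem:fixed-phi} to reduce the numerator to counting $\Phi$ with $[\tfrac{S/R}{\Phi}]=g$, apply the matrix $V_{g,\nu}$ and \eqref{eq:frob-point} to turn this into an $\FF_{q^\nu}$-point count on a twist, verify that the twist is defined over $\FF_{q^\nu}$ and geometrically irreducible, and finish with Lang--Weil. The only cosmetic difference is that you phrase the twist geometrically as a form $Y_g$ of $Y=\mathrm{Spec}(S)$ and invoke Galois descent, whereas the paper works algebraically with the ring $S'=R[V^{-1}\bfY]$ and checks $S'\cap\FF\subseteq\FF_{q^\nu}$ via an explicit automorphism $\tau$ of $L\FF$ restricting to $g$ on $L$ and to $\Fr_{q^\nu}$ on $\FF$; this is exactly the descent argument you allude to in your final paragraph.
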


We define $\comp(R)$ below.

Before proving this theorem, we need to recall the Lang-Weil
estimates which play a crucial role in the proof of the theorem and
in particular give the asymptotic value of the denominator of
$P_{\nu,C}$.

Let $U$ be a closed subvariety of $\mathbb{A}^n_{\FF_q}$ that is
geometrically irreducible. Lang-Weil estimates give that
\begin{equation}\label{LW}
\# U(\FF_q) = q^{\dim U} + O_{n,\deg U}(q^{\dim U- 1/2}).
\end{equation}
Note that both $n$ and $\deg U$ are stable under base change. This
may be reformulated in terms of $\FF_q$-algebras, to say that if
\begin{equation}\label{R:presentation}
R\cong\FF_q[X_1, \ldots, X_n, f_0^{-1}]/(f_1, \ldots, f_k),
\end{equation}
then
\begin{equation}\label{Lang-Weil}
\# \{ \phi \in \Hom_{\FF_q}(R,\FF) : \phi(R)=\FF_{q}\} = q^{\nu \dim
R} + O_{\comp(R)}(q^{\dim R-\frac12}),
\end{equation}
provided $R\otimes \FF$ is a domain, where $\comp(R)$ is a function
of $\sum \deg f_i$ and $n$, taking minimum over all presentations
\eqref{R:presentation}. By the remark following \eqref{LW}, it
follows that if two $\FF_q$-algebras $S$ and $S'$ become isomorphic
over $\FF$, then $\comp(S')$ is bounded in terms of $\comp(S)$. A
final property needed is that if $R\to S$ is a finite map of degree
$d$, then $\comp(S)$ is bounded in terms of $\comp(R)$ and $d$.

\begin{proof}
Let $g\in C$, let $V=V_{g,\nu}$ be as in \eqref{eq:actonmatrix} and
let
 $S' = R[\bfZ]$, where $\bfZ=V^{-1} \bfY$. Note that $\bfZ$ is the unique solution of the linear system
 \begin{equation}\label{eq:DEFZ}
 Y_i = \sum_{j=1}^n v_ij Z_j, \qquad i=1,\ldots, n.
 \end{equation}

 Let $N=\#\Hom_{\FF_q}(S',\FF_{q^\nu})$.
By \eqref{eq:frob-point}, the number of $\Phi\in
\Hom_{\FF_q}(S,\FF)$ with $\left[\frac{S/R}{\Phi}\right]=g$ equals
$N$. By Lemma~\ref{lem:fixed-phi}, for each $\phi$ there exist
exactly $\#G_0/\#C$ homomorphisms $\Phi\in \Hom_{\FF_q}(S,\FF)$ with
$\left[\frac{S/R}{\Phi}\right]=g$ prolonging $\phi$. Hence,
 \[
\# \left\{ \phi \in \Hom_{\FF_q}(R,\FF) : \phi(R)=\FF_{q^\nu} \mbox{
and } \left(\frac{S/R}{\phi}\right)=C\right\} = \#C/\#G_0 \cdot N.
\]
Since $G_\nu$ is a coset of $G_0$, $\#G_0=\#G_\nu$. Hence it
suffices to prove that $N=q^{\nu\dim
R}+O_{\comp(R),\deg\F}(q^{\nu-1/2})$: As $R\to S'$ is a finite map
of degree $\deg \F$, we get that $\dim R=\dim S'$ and $\comp(S')$ is
bounded in terms of $\comp(R)$ and $\deg \F$. It suffices to show
that $S'\cap \FF\subseteq \FF_{q^{\nu}}$ since then by
\eqref{Lang-Weil} we have
\[
N= q^{\nu \dim S'}+O_{\comp S'}(q^{\nu \dim S'-1/2}) = q^{\nu \dim
R} +O_{\comp(R),\deg F}(q^{\nu \dim R-1/2}),
\]
and the proof is done.

Let $L$ be the fraction field of $S$ and $K$ of $R$. Since $L/K$ is
Galois and $L\cap \FF=\FF_{q^\mu}$ and since the actions of
$\Fr_{q^\nu}$ and $g$ agrees on $\FF_{q^\mu}$, it follows that there
exists an automorphism $\tau$ of $L\FF$ such that $\tau|_{L} = g$
and $\tau|_{\FF}=\Fr_{q^\nu}$. By \eqref{eq:actonmatrix}  $\tau$
permutes the equations \eqref{eq:DEFZ}, hence fixes $\bfZ$ and thus
$S'$. In particular, if $x\in S'\cap \FF$, then $x^{q^\nu} =
\tau(x)=x$, so $x\in \FF_{q^\nu}$, as was needed to complete the
proof.

\end{proof}

\end{document}